\newcommand{\Exp}[1]{\mathbb{E}\mathopen{}\left[{#1}\right]\mathclose{}}
\newcommand{\Expt}[1]{\mathbb{E}_t\mathopen{}\left[{#1}\right]\mathclose{}}
\newcommand{\ExpD}[2]{\mathbb{E}_{{#1}}\mathopen{}\left[{#2}\right]\mathclose{}}
\title{Last-Iterate Complexity of SGD for Convex and Smooth Stochastic Problems}
\author{%
  Guillaume Garrigos$^1$,
  Daniel Cortild$^2$, 
  Lucas Ketels$^{1,2}$, 
  Juan Peypouquet$^2$
  \vspace{0.5em} \\
  $^1$Université Paris Cité and Sorbonne Université, CNRS, \\ Laboratoire de
  Probabilités, Statistique et Modélisation, F-75013 Paris, France \\
  $^2$University of Groningen, Groningen, The Netherlands
  \vspace{0.5em} \\
  \texttt{ \{d.cortild, l.ketels, j.g.peypouquet\}@rug.nl, garrigos@lpsm.paris}
}
\renewcommand{\leq}{\le}
\begin{document}

\maketitle

\begin{abstract}
    Most results on Stochastic Gradient Descent (SGD) in the convex and smooth setting are presented under the form of bounds on the ergodic function value gap.
    It is an open question whether bounds can be derived directly on the last iterate of SGD in this context.
    Recent advances suggest that it should be possible. For instance, it can be achieved by making the additional, yet unverifiable, assumption that the variance of the stochastic gradients is uniformly bounded.
    In this paper, we show that there is no need of such an assumption, and that SGD enjoys a $\tilde O \left( T^{-1/2} \right)$ last-iterate complexity rate for convex smooth stochastic problems.
\end{abstract}

\section{Introduction}

We consider the stochastic optimization problem given by 
\begin{equation*}
    \min_{x\in \mathcal{H}} f(x),\quad \text{where }f(x)=\mathbb E_{i\sim \mathcal D}[f_i(x)],
\end{equation*}
where $\mathcal D$ is a distribution over the data. 
For instance, when $\mathcal D$ has finite support we recover the finite-sum minimization problem with $f=\frac{1}{m}\sum_{i=1}^mf_i(x)$.
In this paper, we focus on \emph{convex and smooth} stochastic problems, where each function $f_i\colon \mathcal{H} \to \mathbb{R}$ is convex and $L$-smooth, for some $L \in (0,+\infty)$.
Our objective is to provide complexity guarantees for the Stochastic Gradient Descent (SGD) algorithm \citep{robbins_stochastic_1951}, a widely used method for solving large-scale optimization problems.
The iterative update rule of SGD is defined as
\begin{equation}\label{Algo:SGD}\tag{SGD}
    x_{t+1}=x_t-\gamma \nabla f_{i_t}(x_t),
\end{equation}
where $i_t\sim \mathcal D$ is sampled i.i.d.~at each iteration,
and $\gamma>0$ is the step-size.

\paragraph{Ergodic complexity rates.} 
Standard analyses for SGD in the convex and smooth setting provide upper bounds on the expected ergodic function value gap $\Exp{f(\bar x_T) - \inf f}$, where $\bar x_T$ represents an average of the first $T$ iterates of the algorithm.
These upper bounds usually decompose into two components: a \emph{bias} term, which typically tends to zero, and a \emph{variance} term, which is often bounded and can be kept small by choosing a small enough step-size.
For example, as established in earlier works on SGD \citep{ nemirovski_robust_2009, bach_non-asymptotic_2011}, this algorithm enjoys bounds of the form
\begin{equation}\label{eq:rates generic}
    \Exp{f(\bar x_T) - \inf f} = O \left( \frac{1}{\gamma T} \right) + O \left( \gamma \sigma^2 \right).
\end{equation}
Those foundational studies relied on making an additional assumption on the stochastic gradients.
Such assumption could be a uniform bound on the variance of the gradients, given by
\begin{equation}\label{Ass:uniform variance eq}
    \sup_{x\in \mathcal{H}} \ \Exp{\|\nabla f_i(x) - \nabla f(x)\|^2} \leq \sigma^2,
\end{equation}
or a uniform bound on the (expected) square norm of the gradients, described by
\begin{equation}\label{Ass:uniform bounded gradients eq}
    \sup_{x\in \mathcal{H}} \ \Exp{\|\nabla f_i(x) \|^2} \leq \sigma^2.
\end{equation}
This framework has received continuous improvements, culminating in the recent contributions in \cite{taylor_stochastic_2019}, which provide sharp upper bounds, which are optimal in a certain sense.
Let us also mention that  significant efforts have been dedicated to extending bounds in expectation to high-probability guarantees. 
Namely, in \cite{liu2023highprobabilityconvergencestochastic}, the authors introduced a generic technique to establish high-probability convergence rates for the average optimality gap, under again quite restrictive variance related assumptions. 

Assumptions \eqref{Ass:uniform variance eq} and \eqref{Ass:uniform bounded gradients eq} were historically natural to make.
In its original formulation, the SGD algorithm was written as $x_{t+1} = x_t - \gamma (\nabla f(x_t) + \varepsilon_t)$, where $\varepsilon_t$ represented random noise.
Assuming finite variance for $\varepsilon_t$ was therefore a reasonable prerequisite.
However, the particular form of \eqref{Algo:SGD} implies that $\varepsilon_t$ is not any random vector, but precisely $\nabla f_{i_t}(x_t) - \nabla f(x_t)$.
While the bound of the gradient variance \eqref{Ass:uniform variance eq} holds true in the deterministic case, it is unclear how such property can be verified in practice for a true stochastic problem \citep{bottou_optimization_2018, nguyen_sgd_2018}.
The bound on the gradient norm \eqref{Ass:uniform bounded gradients eq} presents even greater difficulty. In the deterministic setting, this is equivalent to assuming the function $f$ to be Lipschitz continuous, and the class of convex Lipschitz and smooth functions is quite narrow.
Although, one does not need such bound to hold on the whole space but only at the generated iterates, this merely shifts the problem to verifying the boundedness of these iterates. However, this is equally hard to verify apriori, in particular when the step-size is constant.
This issue persists unless additional requirements, such as projections onto a compact domain, are enforced. 
Of course, assumptions \eqref{Ass:uniform variance eq} and \eqref{Ass:uniform bounded gradients eq} got relaxed with time.
These extensions typically aim to control the gradient or the variance with an upper bound depending on $x$, through linear combinations of $\Vert x \Vert^2$ and/or $\Vert \nabla f(x) \Vert^2$ \citep{blum_approximation_1954, gladyshev_stochastic_1965}, \citep[Assumption 2]{khaled_better_2023}, \citep[Assumption 4.3]{bottou_optimization_2018}.
We redirect the reader to  \cite{alacaoglu_towards_2025} and the references therein for a more exhausting description of those inequalities. 
However, the problem remains that these relaxations are impractical or impossible to verify for most problems.

\paragraph{Beyond variance assumptions.}

An interesting and recent line of research has been able to get rid of those assumptions.
This was initiated in \cite{bach_non-asymptotic_2011}, followed by the more recent works 
\cite{
needell_stochastic_2016, 
nguyen_sgd_2018, 
gower_sgd_2019,
khaled_better_2023,
gower_sgd_2021}.
The core to their analyses hinge on the convexity and smoothness of the functions $f_i$, or equivalently the cocoercivity of their gradients $\nabla f_i$.
This enables the derivation of a variance transfer inequality of the form
\begin{equation}\label{eq:variance transfer}
    \Exp{\Vert \nabla f_i(x) \Vert^2} \le O( \Exp{\Vert \nabla f_i(x_*) \Vert^2} + f(x) - \inf f), \quad \text{ for any } x_* \in \argmin f.
\end{equation}
This inequality is particularly useful as it allows us to bound all the variance terms by the constant $\sigma_*^2 \coloneqq \Exp{\Vert \nabla f_i(x_*) \Vert^2}$, at the price of obtaining an additional $f(x) - \inf f$ terms. This is generally not problematic as our objective is to derive bounds for this quantity.

This approach allowed to obtain bounds of the form \eqref{eq:rates generic} where $\sigma^2$ is replaced by $\sigma_*^2$, with no extra assumption than convexity and smoothness.
Those ideas extended to other variants of SGD, such as mini-batch SGD or non-uniform SGD \citep{gower_sgd_2019}.
The results progressively improved, up to \cite{cortild_new_2025} which provided bounds that are optimal in a certain sense, and allowing the step-sizes to cover the full range $\gamma L \in (0,2)$.
Inequality \eqref{eq:variance transfer} itself received some attention and got generalized into the ABC property \citep{khaled_better_2023}, which combines the features of \eqref{eq:variance transfer} and of previous variance assumptions.
Even when relaxing the convexity assumption, it was shown that this ABC property is enough for standard complexity results for SGD to remain true.

\paragraph{Last iterates.}

The above only discusses complexity rates for the \emph{ergodic} function value gap, and not the \emph{last iterates} function value gap $\Exp{f(x_T) - \inf f}$.
Obtaining guarantees for the last iterates is significant in practice, since it is the quantity the practitioner will consider.

The first result on the last iterate in the convex and smooth setting was established by \cite{bach_non-asymptotic_2011}, with improved bounds presented more recently in \cite[Theorem 5]{taylor_stochastic_2019} and \cite[Theorem 3.1]{liu_revisiting_2023}. While the former relies on a Lyapunov analysis guided by standard tools from the performance estimation problem framework, the latter builds on a proof from \cite{zamani_exact_2023}, which was originally developed to establish last-iterate convergence guarantees in deterministic non-smooth settings. 
Notably, all those works rely on  assumptions of uniformly bounded gradients or gradient variance.

It is also worth mentioning  existing results for convex {Lipschitz} stochastic problems.
In this framework, numerous results have established last-iterate convergence, both in expectation and with high-probability \citep{harvey_tight_2018,jain_making_2019}. 
Those studies required a bounded domain onto which the iterates of SGD are projected. However, it was shown in \cite{orabona_blog} that this bounded domain assumption could be lifted.
Note nevertheless that this framework is quite different from ours, and in particular that the Lipschitz assumption implies that the uniform bound on the gradients \eqref{Ass:uniform bounded gradients eq} holds.

Whether making a variance assumption is necessary remained an open question up to this day.
It was believed that an advantage of Momentum SGD over SGD is that the former naturally provides last-iterate results \citep{sebbouh2021almost, gower2025analysis}. Regularized SGD also enjoyed this advantage \citep{kassing2025controlling}. It seemed to be necessary to modify the algorithm to achieve such results, and that plain SGD cannot achieve last-iterate bounds without making a variance assumption.

In this paper, we answer this question, and show that SGD enjoys last-iterate guarantees without variance assumptions.  
Following a similar line as \cite{liu_revisiting_2023}, our work  adopts the techniques from \cite{zamani_exact_2023}, which we combine with a variance transfer inequality to remove the previously made variance assumption.
We show that for step-sizes $\gamma L \in (0,1)$, one can guarantee that 
\begin{equation*}
    \Exp{f(x_T) - \inf f} \le O \left( \frac{1}{\gamma T} + \gamma \ln(T) \sigma_*^2 \right)  T^{2 \gamma L}.
\end{equation*}
As a consequence, the classical choice $\gamma \simeq \tfrac{1}{\sqrt{T}}$ guarantees that
\begin{equation*}
    \Exp{f(x_T) - \inf f} \le O \left( \frac{\ln(T)}{\sqrt{T}} \right).
\end{equation*}
The rest of the paper is devoted to present this main result, its corollaries, and its proofs.

\paragraph{Note on a concurrent work.}
During the preparation of this manuscript, we became aware of the preprint \citep{attia_fast_2025}, which independently presents results very similar to those derived herein. 
In particular, \citep[Theorem 2]{attia_fast_2025} is analogous to our main result Theorem \ref{thm:main}, with the additional improvement that they allow for $\gamma L =1$.
We acknowledge that their work was made publicly available prior to ours.

\section{Problem setting and main assumptions}

Let $\mathcal H$ be a real Hilbert space with associated inner product $\langle \cdot, \cdot \rangle$ and induced norm $\|\cdot\|$. Let $\{f_i\}_{i\in \mathcal I}$ be a family of real-valued differentiable functions $f_i \colon \mathcal H \to \mathbb{R}$, where $\mathcal I$ is a (possibly infinite) set of indices. We consider the problem of minimizing $f := \Exp{f_i}$, where the expectation is taken over the indices $i \in \mathcal I$, with respect to some probability distribution $\mathcal{D}$ over $\mathcal I$. 
The following set of assumptions will be made throughout this paper:

\begin{assumption}[Convex and smooth problem]\label{ASS:1}\label{Ass:convex smooth problem}
With the notation introduced above, we impose the following:
\begin{enumerate}
        \item The problem is well-defined, in the sense that, for every $x \in \mathcal{H}$, the function $i \mapsto f_i(x)$ is $\mathcal{D}$-measurable, and $\Exp{f_i(x)}$ is finite.
        \item The problem is well-posed, in the sense that ${\rm{argmin}}~f \neq \emptyset$.
        \item The problem is convex, in the sense that each $f_i$ is convex.
        \item The problem is $L$-smooth for some $L \in (0,+\infty)$, in the sense that each gradient $\nabla f_i : \mathcal{H} \to \mathcal{H}$ is $L$-Lipschitz continuous. 
    \end{enumerate}
\end{assumption}

The key quantity which will appear in our bounds is the variance of the gradient $\nabla f_i$ at the minimizers.

\begin{assumption}[Solution Gradient Variance]\label{Ass:bounded solution variance}
    We assume that the variance at the solution exists, meaning that 
    \begin{equation}\label{D:bounded solution variance}
    \tag{GV$_*$}
        \mathbb{E}\left[ \Vert \nabla f_i(x_*) \Vert^2 \right] < + \infty
    \end{equation}
    for some $x_* \in {\rm{argmin}}~f$.    We will note
\begin{equation*}
    \sigma_*^2 \coloneqq  \inf\limits_{x_* \in {\rm{argmin}}~f} \Exp{\Vert \nabla f_i(x_*) \Vert^2}.
\end{equation*}
\end{assumption}

Even though $\sigma_*^2$ is defined above as an infimum, we recall from \citep[Lemma 4.17]{garrigos_handbook_2024} that under Assumption \ref{Ass:convex smooth problem}, we have $\sigma_*^2 = \mathbb{V}[\nabla f_i(x_*)]$ for every $x_* \in \argmin~f$.

The constant $\sigma_*^2$ is very important for our stochastic problem because it encodes partially how hard it is.
Indeed, $\sigma_*^2 \geq 0$ is a so-called interpolation constant \citep[Section 4.3]{garrigos_handbook_2024} which is zero if, and only if, interpolation holds, in the sense that all function $f_i$ share a common minimizer.
If interpolation holds, it is clear that our problem is easy, and that sampling one function or the other should not make much difference when running \eqref{Algo:SGD}.
Inversely, if $\sigma_*^2$ is large then the functions $f_i$ are likely to be very different from each other, meaning that the problem is harder, and this will be reflected in the complexity rates through this constant.

One could sense a contradiction between Assumption \ref{Ass:bounded solution variance} and our claim that we do not require a variance assumption.
But we stress here that this is not a variance assumption in the sense of controlling how the variance varies with $x$, as is done for instance in \eqref{Ass:uniform bounded gradients eq} or \eqref{Ass:uniform variance eq}.
Instead, here we only assume that the variance is defined at a single point.
Moreover, using Assumption \ref{Ass:convex smooth problem}, it can be verified in practice, under very mild assumptions, which cover most practical situations:
\begin{itemize}
    \item If $\mathcal{D}$ has finite support, then Assumption \ref{Ass:bounded solution variance} is trivially true.
    \item If the all the functions $f_i$ are nonnegative, then Assumption \ref{Ass:bounded solution variance} is true. This is more generally true if $\Exp{\inf f_i} > - \infty$, see Lemma \citep[Lemma A.10]{cortild_new_2025}.
    \item If the variance $\mathbb{V}\left[ \nabla f_i(x) \right]$ exists at any point $x \in \mathcal{H}$, then it exists at every point, see e.g. Lemma~\ref{L:variance everywhere or nowhere}. 
    In particular, Assumption \ref{Ass:bounded solution variance} is true.
\end{itemize}

\section{Main results}

We will now present our main last-iterate results for \eqref{Algo:SGD}.

\begin{theorem}[Generic step-size]\label{thm:main}
Let Assumptions \ref{Ass:convex smooth problem} and \ref{Ass:bounded solution variance} hold.
Let $T\ge 3$ be fixed, and let $(x_t)_{t=0}^T$ be generated by \eqref{Algo:SGD} with step size verifying 
$\gamma L \in (0, 1)$. Then
\[
    \Exp{f(x_T)-\inf f}\le T^{\phi}\left(\frac{2D^2}{\gamma (1-\gamma L) T} + \frac{8\gamma \ln(T+1)}{(1-\gamma L)^2}\cdot \sigma_*^2\right).
\]
where $D^2 = \Exp{\Vert x_0 - x_* \Vert^2}$  and $\phi=\frac{2\gamma L}{1+\gamma L}\in (0, 1)$.
\end{theorem}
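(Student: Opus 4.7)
The plan is to combine a sharp one-step expected descent inequality (relying on the variance transfer \eqref{eq:variance transfer} rather than a uniform variance bound) with a suffix-averaging argument in the spirit of \cite{zamani_exact_2023, liu_revisiting_2023}, without any recourse to a uniform bound on $\Vert \nabla f_i \Vert$.

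\textbf{Step 1: One-step descent.} The first task is to prove
\begin{equation*}
    \Expt{\Vert x_{t+1} - x_* \Vert^2} \le \Vert x_t - x_* \Vert^2 - 2\gamma(1-\gamma L)\bigl(f(x_t) - \inf f\bigr) + 2\gamma^2 \sigma_*^2.
\end{equation*}
Expanding $\Vert x_{t+1} - x_* \Vert^2$ and taking conditional expectation yields $\Vert x_t - x_* \Vert^2 - 2\gamma \langle \nabla f(x_t), x_t - x_* \rangle + \gamma^2 \Expt{\Vert \nabla f_{i_t}(x_t) \Vert^2}$. Two lower bounds on $\langle \nabla f(x_t), x_t - x_* \rangle$ are available: the convexity bound $f(x_t) - \inf f$, and the co-coercivity bound $\tfrac{1}{L}\Expt{\Vert \nabla f_{i_t}(x_t) - \nabla f_{i_t}(x_*) \Vert^2}$. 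Combined with Young's inequality on $\Vert \nabla f_{i_t}(x_t) \Vert^2 = \Vert (\nabla f_{i_t}(x_t) - \nabla f_{i_t}(x_*)) + \nabla f_{i_t}(x_*) \Vert^2$ and the centering identity $\Exp{\nabla f_i(x_*)} = 0$, a convex combination calibrated at weight $\alpha = 1 - \gamma L$ yields exactly the claimed coefficient $(1-\gamma L)$ on the full range $\gamma L \in (0, 1)$.

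\textbf{Step 2: Suffix ergodic estimates.} For each $k \in \{1, \ldots, T\}$, summing Step 1 from $t = T-k$ to $t = T-1$ and applying convexity to the suffix average $\bar z_k := \tfrac{1}{k}\sum_{t=T-k}^{T-1} x_t$ gives
\begin{equation*}
    \Exp{f(\bar z_k) - \inf f} \le \frac{\Exp{\Vert x_{T-k} - x_* \Vert^2}}{2\gamma(1-\gamma L)\, k} + \frac{\gamma \sigma_*^2}{1-\gamma L}.
\end{equation*}
Iterating Step 1 from $0$ to $T-k$ also provides the crude growth bound $\Exp{\Vert x_{T-k} - x_* \Vert^2} \le D^2 + 2(T-k)\gamma^2 \sigma_*^2$, which is then substituted above.

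\textbf{Step 3: Suffix-averaging reduction.} The heart of the argument is an inductive comparison between $x_T$ and the tail averages $\bar z_k$. I would introduce positive weights $(\lambda_k)_{k=1}^{T}$ satisfying a multiplicative recurrence of the form $\lambda_{k+1}/\lambda_k = 1 + \theta/k$ with $\theta = \phi = 2\gamma L/(1+\gamma L)$, and prove by downward induction on $k$ an inequality of the shape
\begin{equation*}
    \lambda_k \Exp{f(x_T) - \inf f} \le \lambda_{k-1}\Exp{f(\bar z_{k-1}) - \inf f} + \lambda_k \cdot \rho_k,
\end{equation*}
where each remainder $\rho_k$ encodes the cost of replacing a tail average by a slightly shorter one, and is controlled using Step 1 together with the variance-transfer inequality \eqref{eq:variance transfer} to turn any stray $\Vert \nabla f(\cdot) \Vert^2$ contribution into $L(f(\cdot) - \inf f) + \sigma_*^2$. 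Telescoping from $k = T$ down to $k = 1$, invoking Step 2 at the endpoint, and noting $\prod_{k=1}^T (1+\theta/k) \asymp T^{\phi}$ produces the $T^{\phi}$ prefactor, while $\sum_k \rho_k$ inherits a harmonic structure $\propto 1/k$ that yields the $\ln(T+1)$ factor in the noise term.

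\textbf{Main obstacle.} The original Zamani-Glineur argument \cite{zamani_exact_2023} and its stochastic adaptation \cite{liu_revisiting_2023} hinge on a uniform bound $\Vert \nabla f \Vert \le G$, which cleanly absorbs every remainder by a constant $G^2$. In our setting, the only surrogate available is the variance-transferred estimate $\Expt{\Vert \nabla f_{i_t}(x_t) \Vert^2} \lesssim L(f(x_t) - \inf f) + \sigma_*^2$, so each $\rho_k$ carries an implicit $(f(x_{T-k}) - \inf f)$ contribution that must be re-absorbed into the left-hand side of the induction. It is this feedback mechanism that simultaneously forces the restriction $\gamma L < 1$ and fixes the exponent $\phi = 2\gamma L/(1+\gamma L)$. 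Calibrating the weights $\lambda_k$ so that the absorption is tight, while still obtaining a closed-form recurrence, and then tracking every constant through the induction, is the main technical difficulty of the proof.
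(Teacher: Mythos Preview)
Your high-level diagnosis is right: the variance-transfer inequality replaces the uniform gradient bound, the resulting $(f(x_t)-\inf f)$ term must be re-absorbed, and this is what forces the exponent $\phi=2\gamma L/(1+\gamma L)$. But Step~3 as written has a gap, and the paper's route differs in an essential point.

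First, the displayed inductive inequality
\[
\lambda_k\,\Exp{f(x_T)-\inf f}\le \lambda_{k-1}\,\Exp{f(\bar z_{k-1})-\inf f}+\lambda_k\rho_k
\]
cannot telescope: the left-hand side is the same quantity for every $k$, so summing over $k$ still leaves you with a weighted sum of \emph{all} suffix-average gaps on the right. More importantly, Steps~1 and~2 use only the descent inequality anchored at $x_*$. To compare $f(x_T)$ to any suffix average you need the one-step inequality anchored at a generic $\mathcal F(x_0,\dots,x_t)$-measurable point $z_t$,
\[
\Expt{\|x_{t+1}-z_t\|^2}\le\|x_t-z_t\|^2-2\gamma\bigl(f(x_t)-f(z_t)\bigr)+\gamma^2\,\Expt{\|\nabla f_{i_t}(x_t)\|^2},
\]
and it is \emph{here} that the $\varepsilon$-parameterized variance transfer must be injected, producing the absorbable term $2\gamma^2 L(1+\varepsilon)(f(x_t)-\inf f)$. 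That is precisely Lemma~\ref{lemma:tech1}; your Step~1 is only its special case $z_t=x_*$, which cannot feed a last-iterate argument on its own.

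The paper then avoids backward induction altogether: it sets $z_t=(1-p_t)x_t+p_tz_{t-1}$ with $z_{-1}=x_*$, multiplies by weights $\alpha_t$ satisfying $\alpha_tp_t=\alpha_{t-1}$, sums forward over $t=0,\dots,T$, and applies Jensen to $f(z_t)$. The recurrence $\alpha_t=\tfrac{T-t+1}{T-t+\phi}\alpha_{t-1}$ is chosen so that, after Jensen, the coefficient of $f(x_t)-\inf f$ vanishes for every $t<T$ (Lemma~\ref{lemma:technical2}). Then $\alpha_{T-1}\asymp T^{1-\phi}$ yields the $T^\phi$ prefactor and $\sum_t\alpha_t/\alpha_{T-1}$ the $\ln(T+1)$ factor. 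So your ``re-absorption fixes $\phi$'' intuition is exactly correct, but the mechanism that realizes it is a single forward weighted sum with moving anchors $z_t$, not a Shamir--Zhang style recursion on suffix averages of iterates.
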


We note that this yields the wanted bound of the order $O(T^{-1}+\ln(T))$, but with an additional multiplicative factor of $T^\phi$. 
But fortunately $\phi$ depends on the step-size itself, and it is quite easy to see that if the step-size has a mere dependency in $T$ then $T^\phi =  O(1)$ (see Lemma \ref{L:power of stepsize is not scary appendix} in the appendix).

\begin{lemma}[$T^\phi$ is not so scary]\label{L:power of stepsize not scary}
    Let $\phi=\frac{2\gamma L}{1+\gamma L}$ and $T \geq  2$.
    If $\gamma \leq \frac{K}{\ln(T)}$, then 
    $T^\phi \leq e^{2LK}$.
\end{lemma}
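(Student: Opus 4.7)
The plan is straightforward: rewrite $T^\phi = \exp(\phi \ln T)$ and then bound the exponent $\phi \ln T$ by $2LK$. The key observation is that the denominator in $\phi$ is at least $1$, so $\phi$ admits a clean linear upper bound in $\gamma$.

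First I would note that since $\gamma L > 0$, we have $1 + \gamma L \geq 1$, and therefore
\[
    \phi = \frac{2\gamma L}{1 + \gamma L} \leq 2\gamma L.
\]
Multiplying by $\ln T \geq 0$ (using $T \geq 2$, so $\ln T > 0$) and plugging in the hypothesis $\gamma \leq K/\ln(T)$ gives
\[
    \phi \ln T \leq 2 \gamma L \ln T \leq 2L \cdot \frac{K}{\ln(T)} \cdot \ln T = 2LK.
\]
Exponentiating yields $T^\phi = e^{\phi \ln T} \leq e^{2LK}$, as desired.

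There is no real obstacle here: the lemma is essentially a one-line algebraic observation that $\phi$ is small enough that the hypothesized near-logarithmic decay of $\gamma$ tames the factor $T^\phi$ to a constant. The only thing to be mildly careful about is ensuring $\ln T$ is strictly positive before dividing, which is why the statement assumes $T \geq 2$.
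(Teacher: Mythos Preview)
Your proof is correct and is essentially identical to the paper's own argument: bound $\phi \le 2\gamma L$ using $1+\gamma L \ge 1$, combine with $\gamma \le K/\ln(T)$ to get $\phi \ln T \le 2LK$, and exponentiate.
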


By taking a step-size of the order $\tfrac{1}{T^\beta}$, we obtain the following consequence of Theorem \ref{thm:main}, whose proof is given in Section \ref{coro:allalpha:proof}.

\begin{corollary}[Polynomial step-size]\label{coro:allalpha}
Let Assumptions \ref{Ass:convex smooth problem} and \ref{Ass:bounded solution variance} hold.
Let $T\ge 3$ be fixed, and let $(x_t)_{t=0}^T$ be generated by \eqref{Algo:SGD} with step-size $\gamma=\frac{1}{C L T^\beta}$, where $C \geq 2$ and $\beta \in (0, 1)$. Then 
    \[
        \Exp{f(x_T)-\inf f}\le  O\left(\frac{D^2}{T^{1-\beta}}+ \frac{\ln(T+1)}{T^\beta}\cdot\sigma_*^2\right),
    \]
    where $D^2 = \Exp{\Vert x_0 - x_* \Vert^2}$. The explicit constants hidden in the $O$ can be found in \eqref{E:corollaries}.
\end{corollary}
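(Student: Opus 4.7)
The plan is to simply plug the choice $\gamma=\tfrac{1}{CLT^\beta}$ into the bound of Theorem~\ref{thm:main} and check that each factor behaves as advertised. The content is entirely mechanical, so the proof is essentially a careful tracking of constants; there is no real obstacle.

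First, I would verify admissibility of the step-size. Since $C\geq 2$ and $T\geq 3$, we have $\gamma L = \tfrac{1}{CT^\beta} \leq \tfrac{1}{2}$, so in particular $\gamma L\in(0,1)$ and Theorem~\ref{thm:main} applies. Moreover, $1-\gamma L \geq 1-\tfrac{1}{C}\geq \tfrac{1}{2}$, which gives the uniform lower bounds
\[
    \frac{1}{1-\gamma L}\leq 2,\qquad \frac{1}{(1-\gamma L)^2}\leq 4.
\]

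Next, I would control the multiplicative factor $T^\phi$ via Lemma~\ref{L:power of stepsize not scary}. Since the function $T\mapsto \tfrac{\ln T}{T^\beta}$ attains its maximum on $[1,\infty)$ at $T=e^{1/\beta}$ with value $\tfrac{1}{\beta e}$, we have
\[
    \gamma \;=\; \frac{1}{CLT^\beta} \;=\; \frac{1}{CL}\cdot\frac{1}{T^\beta} \;\leq\; \frac{K}{\ln T}\qquad\text{with}\qquad K\coloneqq \frac{1}{C L\beta e}.
\]
Lemma~\ref{L:power of stepsize not scary} therefore gives $T^\phi \leq e^{2LK}=\exp\!\bigl(\tfrac{2}{C\beta e}\bigr)=:C_\phi$, a constant depending only on $C$ and $\beta$.

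Finally, I would substitute the step-size into the two bracketed terms of Theorem~\ref{thm:main}:
\[
    \frac{2D^2}{\gamma(1-\gamma L)T} \;\leq\; \frac{4D^2\,CL}{T^{1-\beta}},
    \qquad
    \frac{8\gamma \ln(T+1)}{(1-\gamma L)^2}\sigma_*^2 \;\leq\; \frac{32\,\ln(T+1)}{CL\,T^\beta}\sigma_*^2.
\]
Combining these with the $T^\phi$ bound yields
\[
    \Exp{f(x_T)-\inf f} \;\leq\; C_\phi\!\left(\frac{4CL\,D^2}{T^{1-\beta}} + \frac{32\,\ln(T+1)}{CL\,T^\beta}\sigma_*^2\right),
\]
which is exactly the claimed $O\!\bigl(\tfrac{D^2}{T^{1-\beta}}+\tfrac{\ln(T+1)}{T^\beta}\sigma_*^2\bigr)$ rate, with explicit constants readily identified for later reference in \eqref{E:corollaries}.
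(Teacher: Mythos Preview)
Your proof is correct and follows essentially the same route as the paper: verify $\gamma L\le 1/2$ so that $1-\gamma L\ge 1/2$, bound $T^\phi$ by the constant $\exp\!\bigl(\tfrac{2}{C\beta e}\bigr)$ via the elementary inequality $e\beta\ln T\le T^\beta$, and substitute into the bound of Theorem~\ref{thm:main}. The resulting explicit constants $4C_\phi CL$ and $32C_\phi/(CL)$ coincide with the paper's $4BCL$ and $32B/(CL)$ in \eqref{E:corollaries}.
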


The bound in the above corollary is quite standard, and matches (up to the logarithmic factor) results which were previously obtained for ergodic bounds \citep{gower_sgd_2021}. 
As usual, the optimal choice for the exponent is given by $\beta=\tfrac12$. 
This statement follows in the same line as the previous corollary, and its proof may also be found in Section \ref{coro:allalpha:proof}. 

\begin{corollary}[Best polynomial step-size]\label{coro:alpha2}
    Let Assumptions \ref{Ass:convex smooth problem} and \ref{Ass:bounded solution variance} hold.
    Let $T\ge 3$ be fixed, and let $(x_t)_{t=0}^T$ be generated by \eqref{Algo:SGD} with step-size $\gamma=\frac{1}{C L\sqrt{T}}$, where $C \geq 2$.
    Then
    \begin{equation*} 
        \Exp{f(x_T)-\inf f}
        \le 
        \frac{9\cdot CLD^2}{\sqrt{T}} + \frac{67\cdot  \ln(T+1)}{CL\sqrt{T}}\cdot \sigma_*^2,   
    \end{equation*}
    where $D^2 = \Exp{\Vert x_0 - x_* \Vert^2}$. 
\end{corollary}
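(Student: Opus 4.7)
The plan is to deduce Corollary \ref{coro:alpha2} as a direct specialization of Theorem \ref{thm:main} (equivalently, of Corollary \ref{coro:allalpha} with $\beta=1/2$), by substituting the chosen step-size $\gamma = 1/(CL\sqrt{T})$ and carefully tracking the constants. There is no new analytical content to derive; the entire proof is a bookkeeping exercise using Theorem \ref{thm:main} together with Lemma \ref{L:power of stepsize not scary}.

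First I would check admissibility: since $C \geq 2$ and $T \geq 3$, we have $\gamma L = 1/(C\sqrt{T}) \leq 1/(2\sqrt{3}) < 1/2$, so $\gamma L \in (0,1)$ and, crucially, $1 - \gamma L \geq 1/2$. This last inequality will absorb the $(1-\gamma L)$ and $(1-\gamma L)^2$ denominators that appear in Theorem \ref{thm:main}, at the cost of only the constants $2$ and $4$ respectively.

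Next I would tame the $T^\phi$ prefactor via Lemma \ref{L:power of stepsize not scary}. Elementary calculus shows that $\ln(T)/\sqrt{T}$ attains its maximum $2/e$ at $T=e^2$, so $\gamma = 1/(CL\sqrt{T}) \leq (2/(eCL))/\ln(T)$, meaning the hypothesis of the lemma is satisfied with $K = 2/(eCL)$. This gives $T^\phi \leq e^{2LK} = e^{4/(eC)} \leq e^{2/e}$ whenever $C \geq 2$. Since $e^{2/e}\approx 2.09$ is an absolute constant, the $T^\phi$ factor will only inflate the final constants mildly.

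Finally I would substitute $\gamma = 1/(CL\sqrt{T})$ into the parenthesized bound in Theorem \ref{thm:main}. The bias term becomes $\tfrac{2D^2}{\gamma(1-\gamma L)T} = \tfrac{2CLD^2}{(1-\gamma L)\sqrt{T}} \leq \tfrac{4CLD^2}{\sqrt{T}}$, and the variance coefficient becomes $\tfrac{8\gamma \ln(T+1)}{(1-\gamma L)^2} = \tfrac{8\ln(T+1)}{CL(1-\gamma L)^2\sqrt{T}} \leq \tfrac{32\ln(T+1)}{CL\sqrt{T}}$. Multiplying through by $T^\phi \leq e^{2/e}$ and rounding the numerical constants up ($4 e^{2/e} \leq 9$ and $32 e^{2/e} \leq 67$) yields the stated inequality. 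The only potential obstacle is ensuring that the prefactor bound from Lemma \ref{L:power of stepsize not scary} is indeed a true absolute constant under the hypothesis $C\geq 2$, which is exactly what the computation above confirms.
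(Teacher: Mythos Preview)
Your proposal is correct and follows essentially the same route as the paper: substitute $\gamma=1/(CL\sqrt{T})$ into Theorem~\ref{thm:main}, use $1-\gamma L\ge 1/2$ to clear the denominators, and bound the prefactor $T^\phi$ by the absolute constant $e^{4/(eC)}\le e^{2/e}<2.09$ via Lemma~\ref{L:power of stepsize not scary} (the paper performs the same calculation inline through $\phi\le 2\gamma L$ and $e\beta\ln T\le T^\beta$). The resulting numerical constants $4e^{2/e}\le 9$ and $32e^{2/e}\le 67$ are exactly those the paper obtains.
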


As a final direct consequence, we can provide complexity rates on the last iterates:

\begin{corollary}[Complexity rate] \label{coro:complexity}
    Let Assumptions \ref{Ass:convex smooth problem} and \ref{Ass:bounded solution variance} hold.
    Let $(x_t)_{t=0}^T$ be generated by \eqref{Algo:SGD}.
    For every $\varepsilon>0$, we can guarantee that 
    \begin{equation*}
        \Exp{f(x_T) - \inf f} \leq \varepsilon
    \end{equation*}
    provided that we take
    \begin{equation*}
        \gamma = \frac{1}{2L\sqrt{T}}, \text{ for some }, 
        \quad \text{ and } \quad 
        \frac{T}{(1 + \ln (T+1))^2 } \geq \frac{K^2}{\varepsilon^2},
    \end{equation*}
    where $K = \max\Big\{18 L \Exp{\|x_0 - x_*\|^2}, \tfrac{67 \sigma^2_*}{2L}\Big\}$.
    In particular, the above bound on $T$ is true if
    \[
    T \geq \frac{K'}{\varepsilon^{\beta}}, 
    \] 
    where 
    $K' = \left( \tfrac{3K}{e \alpha} \right)^\beta$
    and for any $\beta >2$.
\end{corollary}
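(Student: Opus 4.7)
The plan is to chain Corollary~\ref{coro:alpha2} with a standard elementary comparison between logarithm and power functions, and then convert the resulting implicit inequality into the explicit $\varepsilon^{-\beta}$ bound.

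First I would apply Corollary~\ref{coro:alpha2} with the stated choice $\gamma = \tfrac{1}{2L\sqrt{T}}$, which corresponds to $C = 2$, giving
$$ \Exp{f(x_T)-\inf f} \le \frac{18 L D^2}{\sqrt{T}} + \frac{67\,\sigma_*^2\,\ln(T+1)}{2L\sqrt{T}}. $$
By the very definition $K = \max\{18 L D^2,\ 67\sigma_*^2/(2L)\}$, each numerator is bounded by $K$, so the right-hand side is at most $K(1+\ln(T+1))/\sqrt{T}$. Requiring this quantity to be at most $\varepsilon$ is, after squaring, exactly the condition $T/(1+\ln(T+1))^2 \ge K^2/\varepsilon^2$, which establishes the first claimed sufficient condition.

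Next I would derive the polynomial-in-$\varepsilon$ bound. The key elementary tool is the inequality $\ln y \le y^\alpha/(e\alpha)$, valid for all $y \ge 1$ and $\alpha > 0$, obtained by maximizing $y \mapsto \ln y - y^\alpha/(e\alpha)$. Taking $\alpha = \tfrac{1}{2}-\tfrac{1}{\beta} \in (0,\tfrac{1}{2})$ (well-defined since $\beta > 2$) and $y = T+1$, a short computation using $T+1 \le 2T$ and absorbing the additive constant $1$ into the leading term should yield $1 + \ln(T+1) \le \tfrac{3\,T^\alpha}{e\alpha}$ on the admissible range $T \ge 3$. Substituting gives $\sqrt{T}/(1+\ln(T+1)) \ge \tfrac{e\alpha}{3}\,T^{1/\beta}$, so the condition $\sqrt{T}/(1+\ln(T+1)) \ge K/\varepsilon$ is implied by $T^{1/\beta} \ge 3K/(e\alpha\,\varepsilon)$, that is $T \ge (3K/(e\alpha))^\beta/\varepsilon^\beta$, which is exactly $T \ge K'/\varepsilon^\beta$ with $K' = (3K/(e\alpha))^\beta$.

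The main obstacle, such as it is, will be cleanly justifying the numerical constant $3$ in the log-vs-power bound, since a naive application of $\ln y \le y^\alpha/(e\alpha)$ produces a bound of the form $1 + y^\alpha/(e\alpha)$ rather than a single clean term $3\,y^\alpha/(e\alpha)$; one must check that over the admissible range of $T$ and $\alpha$ the additive $1$ can indeed be absorbed into the factor $3/(e\alpha)$. Note also that the factor $1/(e\alpha)$ blows up as $\beta \downarrow 2$, which explains the restriction $\beta > 2$ in the statement. Beyond this elementary bookkeeping, no additional probabilistic or optimization argument is needed.
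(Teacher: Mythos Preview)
Your proposal is correct and follows essentially the same route as the paper: apply Corollary~\ref{coro:alpha2} with $C=2$, bound both terms by $K$, then use the elementary inequality $\ln y \le y^\alpha/(e\alpha)$ with $\alpha = \tfrac{1}{2}-\tfrac{1}{\beta}$ (equivalently $\beta = 2/(1-2\alpha)$) to convert the implicit logarithmic condition into the explicit power bound $T \ge K'/\varepsilon^\beta$. The only cosmetic difference is that the paper obtains the constant $3$ via $1+\ln(T+1)\le 1+2\ln T \le 3\ln T$ for $T\ge 3$ before applying the log-power inequality, whereas you apply it directly to $\ln(T+1)$ and absorb the residual; both routes are equally elementary and yield the stated $K'=(3K/(e\alpha))^\beta$.
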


Let us now provide some comments on those results.

\begin{remark}[About the tightness of the bound] \label{rem:tightness}
    As far as we know, it is not known whether the best possible last-iterate bound for \eqref{Algo:SGD} under Assumption \ref{Ass:convex smooth problem} is $O(1/\sqrt{T})$ or $O(\ln(T)/\sqrt{T})$.
    Therefore, it is worth looking at what has been done for convex and {Lipschitz} problems, which enjoys a rich literature with connections to online learning.
    For instance, \cite{harvey_tight_2018} show that if \eqref{Algo:SGD} is run with a vanishing step-size schedule $\gamma_t = {1}/{\sqrt{t}}$, then it is not possible to obtain a better bound than $\ln (T)/\sqrt{T}$.
    However, \cite{jain_making_2019} proved that with a non-standard choice of step-size the logarithmic dependency could be removed.
    We can then only conjecture that for convex smooth problems it is also possible to eliminate the $\ln(T)$ term.
\end{remark}

\begin{remark}[About (non-)adaptivity to smoothness]
    The results we obtained are not adaptive to the smoothness of the problem, in the sense that it is necessary to know the Lipschitz constant $L$ to set the step-size $\gamma$.
    Rules for defining the step-size which are adaptive to $L$ already exist for \eqref{Algo:SGD}, such as Adagrad \citep{streeter2010less} or Polyak step-sizes \citep{loizou2021stochastic, gower2025analysis, orabona2025new}.
    It would be interesting to know if those methods benefit from last-iterate guarantees.
\end{remark}

\begin{remark}[About (non-)adaptivity to interpolation]
    Our results cannot lead to bounds which are optimal \emph{and} adaptive with respect to interpolation.
    The presented bounds are trivially adaptive to $\sigma_*^2$ because we do not need to know it.
    But they are not optimal with respect to interpolation. 
    Indeed, an ideal bound would be
    \begin{equation} \label{e:interpolation ideal bound}
         O \left( \frac{D^2}{T} + \frac{\ln(T)}{\sqrt{T}} \cdot \sigma^2_*  \right).
    \end{equation}
    Such bound would mean that if $\sigma_*^2>0$ then the bound becomes $\tilde O(1/\sqrt{T})$ as usual.
    But if interpolation holds then the complexity switches to the $O(1/T)$ rate which is optimal for this scenario.
    As far as we know, there is no known result for SGD which is able to achieve \eqref{e:interpolation ideal bound} while at the same time being adaptive to interpolation.
    The only known way to obtain \eqref{e:interpolation ideal bound} is by knowing  (an estimate of) $\sigma_*^2$ and using this constant to define the step-size.
    With such knowledge, one could for instance set 
    \begin{equation*}
        \gamma = 
        \begin{cases}
            \frac{1}{4L \sqrt{T}} & \text{ if } \sigma_*^2 >0 \\
            \frac{1}{4L \ln(T)} & \text{ if } \sigma_*^2 = 0,
        \end{cases}
    \end{equation*}
    which will provide a $\tilde O \left( \tfrac{1}{T} + \tfrac{\sigma_*^2}{\sqrt{T}} \right)$ bound as a consequence of Theorem \ref{thm:main} and Corollary \ref{coro:alpha2}.
    Another standard choice (see e.g. \citep[Section D.2]{gower2025analysis}) could be
    \begin{equation*}
        \gamma = \frac{1}{4L\ln(T) \sqrt{1+ \sigma_*^2 T} }
    \end{equation*}
    which can also generate such a bound, using for instance \citep[Theorem D.1]{gower2025analysis} together with Theorem \ref{thm:main} and Lemma \ref{L:power of stepsize not scary}.
    As discussed in \citep[Section D.2]{gower2025analysis}, the constant $\sigma_*^2$ could be replaced with $2L \Delta_*$, if $\Delta_* \coloneqq \inf f - \mathbb{E}[ \inf f_i]$ itself can be computed.
    But that remains a challenge which could be as hard as minimizing $f$.
\end{remark}

\begin{remark}[Extensions to mini-batch SGD]
    All our results could be extended to mini-batch version of \eqref{Algo:SGD}.
    Indeed, as described in \citep[Section G]{gower_sgd_2019}, such mini-batch version can be seen as an instance of \eqref{Algo:SGD} itself, but applied to a different yet equivalent problem.
    The only consequence of this change would be that the constants defining the problem, namely $L$ and $\sigma_*^2$, will be updated through explicit formulas depending on the batch size. 
    
For instance, assume that support of $\mathcal{D}$ is finite and equal to $\mathcal I=\{1,\dots,n\}$, and pick a batch size $1 \leq b \leq n$. At each iteration, the mini-batch SGD algorithm computes
\begin{equation}\label{D:SGD miibatch}\tag{SGD$_b$}
    x_{t+1} = x_t - \frac{\gamma}{b} \sum_{i \in B_t} \nabla f_{i}(x_t),
\end{equation}
where $B_t$ is sampled independently, and uniformly among the subsets of $\mathcal I$ of size $b$. This algorithm is precisely \eqref{Algo:SGD} applied to  
\begin{equation*}
    \min_x \ f(x)  =\ExpD{\mathcal{B}}{\hat f_B(x)},
    \text{ where }
    \hat f_B(x) \coloneqq \frac{1}{b} \sum_{i \in B} \nabla f_{i}(x),
\end{equation*}
and where the expectation is taken with respect to the uniform law $\mathcal{B}$ over the set $\text{batch}_b$, which consists of all the subsets of $\mathcal I$ of size $b$.
If each $f_i$ is $L_i$-smooth, and $f=\sum_{i=1}^nf_i$ is $L_f$-smooth, the problem above satisfies Assumption \ref{ASS:1} with
$$L=\frac{n-b}{b(n-1)}L_f+\frac{n(b-1)}{b(n-1)}\max_iL_i$$
and
$$\sigma_*^2 =
\ExpD{\mathcal{B}}{\Vert \nabla f_B(x_*) \Vert^2}
= \frac{n-b}{nb(n-1)} \sum_{i=1}^n \Vert \nabla f_i(x_*) \Vert^2.$$
Further details can also be found in \citep[Appendix G]{cortild_new_2025}, which also contains the tools to extend such results to non-uniform sampling, such as importance sampling.
\end{remark}

\section{Proofs of the  main results}\label{thm:main:proof}

In our proofs, we will consider iterates $(x_t)_{t=0}^T$ generated by \eqref{Algo:SGD}.
We will denote $\mathcal F(x_0,\dots,x_t)$. the $\sigma$-algebra generated by $\{x_0,\dots,x_t\}$. 
We will also note $\mathbb E_t[Z]$ the conditional expectation of a random variable $Z$ with respect to $\mathcal F(x_0,\dots,x_t)$.

Our first main technical contribution is to obtain a bound of the form \eqref{e:li main energy} without uniform variance assumption. This will be the subject of Lemma \ref{lemma:tech1}. 
Once such a bound is obtained, we can obtain bounds on the last-iterate function gap. This will be presented in the subsequent Lemma \ref{lemma:technical2}. Using these two results we prove Theorem \ref{thm:main} in Section \ref{sec:mainproof}, and derive the remaining corollaries in Sections \ref{coro:allalpha:proof} and \ref{coro:allalpha:proof2}.
Before moving the  proof itself, let us state our main tool:

\begin{lemma}[Variance Transfer] \label{L:variance transfer}
Let Assumptions \ref{Ass:convex smooth problem} and \ref{Ass:bounded solution variance} hold true.
Let $x_*\in\argmin f$ and $x\in\mathcal H$. For every $\varepsilon>0$, we have
$$ \Exp{\|\nabla f_i(x)\|^2} \leq 2L(1+\varepsilon)(f(x) - \inf f) + \left(1 +\frac{1}{\varepsilon} \right)\Exp{\|\nabla f_i(x_*)\|^2}.$$
\end{lemma}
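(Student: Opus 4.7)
The plan is to combine the standard Young-type inequality for squared norms with the cocoercivity of the individual gradients $\nabla f_i$, exploiting the fact that $x_*$ is a minimizer of $f$ (so that $\nabla f(x_*)=0$) to kill a cross term after taking expectation in $i$.

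First, I would split the gradient as $\nabla f_i(x) = (\nabla f_i(x) - \nabla f_i(x_*)) + \nabla f_i(x_*)$ and apply Young's inequality $\|a+b\|^2 \leq (1+\varepsilon)\|a\|^2 + (1+1/\varepsilon)\|b\|^2$, giving
\[
\|\nabla f_i(x)\|^2 \leq (1+\varepsilon)\|\nabla f_i(x) - \nabla f_i(x_*)\|^2 + \left(1 + \tfrac{1}{\varepsilon}\right)\|\nabla f_i(x_*)\|^2.
\]
The key step is then to bound the first term using cocoercivity: since each $f_i$ is convex and $L$-smooth, the Baillon--Haddad theorem (or a direct quadratic upper bound argument) yields
\[
\|\nabla f_i(x) - \nabla f_i(x_*)\|^2 \leq 2L\bigl(f_i(x) - f_i(x_*) - \langle \nabla f_i(x_*), x - x_*\rangle\bigr).
\]
This inequality is standard and is exactly what makes the analysis work without a uniform variance assumption.

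Next, I would take the expectation over $i \sim \mathcal{D}$. Linearity of expectation turns $\mathbb{E}[f_i(x) - f_i(x_*)]$ into $f(x) - \inf f$ (recalling that $x_*$ minimizes $f$, so $f(x_*)=\inf f$). For the cross term, I would use that $\mathbb{E}[\nabla f_i(x_*)] = \nabla f(x_*) = 0$, so that $\mathbb{E}[\langle \nabla f_i(x_*), x - x_*\rangle] = 0$. This is where Assumption \ref{Ass:bounded solution variance} is implicitly used, since it guarantees the expectation of $\nabla f_i(x_*)$ is well-defined (and equals $\nabla f(x_*)$ by the usual interchange of expectation and gradient for dominated families). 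Hence
\[
\mathbb{E}\bigl[\|\nabla f_i(x) - \nabla f_i(x_*)\|^2\bigr] \leq 2L(f(x) - \inf f).
\]

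Finally, plugging this back into the Young-type bound and taking expectation in $i$ delivers the desired inequality. I do not anticipate any serious obstacle: each step is a textbook inequality, and the only subtle point is making sure the expectation of the cross term vanishes, which follows from $x_* \in \argmin f$ together with the integrability ensured by Assumption \ref{Ass:bounded solution variance}.
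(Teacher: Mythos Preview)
Your proposal is correct and follows essentially the same approach as the paper: a Young-type split of $\nabla f_i(x)$ around $\nabla f_i(x_*)$, followed by the expected smoothness bound $\Exp{\|\nabla f_i(x)-\nabla f_i(x_*)\|^2}\le 2L(f(x)-\inf f)$. The only difference is cosmetic: the paper cites this last inequality as a known lemma, whereas you derive it explicitly from pointwise cocoercivity and then take expectation, using $\Exp{\nabla f_i(x_*)}=\nabla f(x_*)=0$ to eliminate the cross term.
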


\begin{proof}
    This can be found for instance \citep[Lemma 4.20]{garrigos_handbook_2024}.
    Simply use a Fenchel-Young inequality
    \begin{equation*}
        \Exp{\|\nabla f_i(x)\|^2} \leq 
        (1+ \varepsilon) \Exp{\|\nabla f_i(x) - \nabla f_i(x_*)\|^2} + (1 + \varepsilon^{-1}) \Exp{\|\nabla f_i(x_*)\|^2} ,
    \end{equation*}
    and conclude after combining it with an expected smoothness inequality, which is a consequence of the convexity and smoothness of the functions $f_i$ (see e.g. \citep[Lemma 4.8]{garrigos_handbook_2024}):
    \begin{equation*}
        \frac{1}{2L}\Exp{\|\nabla f_i(x) - \nabla f_i(x_*)\|^2} 
        \leq 
        f(x) - \inf f.
    \end{equation*}
\end{proof}
 
\subsection{Lemma: Bounding a linear combination of function values}

\begin{lemma}\label{lemma:tech1}
    Let $f_i$ be convex and $L$-smooth, and let $(x_t)_{t=0}^T$ be generated by SGD with constant step-size $\gamma$ for $T\ge 1$. Then, for all $t=0, \ldots, T$ and $z_t\in \mathcal F(x_0, \ldots, x_t)$, it holds that 
    \begin{equation}\label{e:li main energy}
    \mathbb{E}\left[ a f(x_t) + b f(z_t) + c \inf f  \right]
    \leq 
    \frac{1}{2 \gamma} \mathbb{E} \Vert x_t - z_t \Vert^2 - \frac{1}{2 \gamma} \mathbb{E} \Vert x_{t+1} - z_t \Vert^2 + v, 
    \end{equation}
    where
    \[
        a = 1 - \gamma L (1+\varepsilon), \quad
        b = -1, \quad
        c = \gamma L (1+\varepsilon), \quad
        v = \frac{\gamma (1+\varepsilon^{-1}) \sigma_*^2}{2}
        \quad \hbox{and}\quad 
        \varepsilon=\frac{1-\gamma L}{1+\gamma L}.
    \]
\end{lemma}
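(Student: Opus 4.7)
The plan is to carry out the standard SGD one-step descent analysis against an arbitrary $\mathcal{F}_t$-measurable anchor point $z_t$, replacing the usual bounded-variance assumption by the variance transfer Lemma \ref{L:variance transfer}. Concretely, I would start from the update rule and expand
\[
  \|x_{t+1} - z_t\|^2 = \|x_t - z_t\|^2 - 2\gamma \langle \nabla f_{i_t}(x_t), x_t - z_t \rangle + \gamma^2 \|\nabla f_{i_t}(x_t)\|^2,
\]
then take the conditional expectation $\mathbb{E}_t[\cdot]$. Since both $x_t$ and $z_t$ are $\mathcal{F}_t$-measurable while $i_t$ is independent of $\mathcal{F}_t$, the cross term becomes $\langle \nabla f(x_t), x_t - z_t\rangle$, and by convexity of $f$ this is at least $f(x_t) - f(z_t)$.

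Next, I would handle the $\gamma^2\mathbb{E}_t\|\nabla f_{i_t}(x_t)\|^2$ term. This is the key step where we avoid any uniform variance assumption: conditionally on $\mathcal{F}_t$, $x_t$ is deterministic, so Lemma \ref{L:variance transfer} gives
\[
  \mathbb{E}_t \|\nabla f_{i_t}(x_t)\|^2 \leq 2L(1+\varepsilon)(f(x_t) - \inf f) + (1 + \varepsilon^{-1}) \sigma_*^2,
\]
for any $\varepsilon > 0$; in particular for the value $\varepsilon = (1-\gamma L)/(1+\gamma L)$ specified in the statement. Plugging this back and rearranging yields
\[
  \frac{1}{2\gamma}\|x_t - z_t\|^2 - \frac{1}{2\gamma}\mathbb{E}_t\|x_{t+1} - z_t\|^2 \geq \bigl(1 - \gamma L(1+\varepsilon)\bigr) f(x_t) - f(z_t) + \gamma L(1+\varepsilon) \inf f - \frac{\gamma(1+\varepsilon^{-1})\sigma_*^2}{2},
\]
which is exactly the claimed inequality with the prescribed $a,b,c,v$ in conditional form.

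To finish, I would simply take the full expectation on both sides and invoke the tower property, turning $\mathbb{E}[\mathbb{E}_t \|x_{t+1} - z_t\|^2]$ into $\mathbb{E}\|x_{t+1}-z_t\|^2$. There is essentially no obstacle here: the proof is a three-line manipulation once one identifies that the variance transfer inequality is the right substitute for assumption \eqref{Ass:uniform variance eq} or \eqref{Ass:uniform bounded gradients eq}. The only subtle point worth flagging is the measurability argument allowing the cross term to collapse to $f(x_t) - f(z_t)$, which crucially relies on $z_t$ being $\mathcal{F}_t$-measurable; this is exactly the generality we need later in Lemma \ref{lemma:technical2} when $z_t$ will be chosen depending on the past trajectory. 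The specific value of $\varepsilon$ does not enter the derivation and is only fixed so that downstream the coefficient $a = \varepsilon$ telescopes nicely in the subsequent weighted-sum argument.
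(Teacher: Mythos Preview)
Your proposal is correct and follows essentially the same route as the paper: expand the squared distance, use convexity for the cross term, apply the variance transfer Lemma~\ref{L:variance transfer} to the $\gamma^2\mathbb{E}_t\|\nabla f_{i_t}(x_t)\|^2$ term, rearrange, and take total expectation. The only cosmetic difference is that the paper applies convexity of each $f_i$ \emph{before} taking conditional expectation (getting $f_{i_t}(z_t)-f_{i_t}(x_t)$ first), whereas you take $\mathbb{E}_t$ first and then use convexity of $f$; both orderings give the same bound.
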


\begin{proof}

Let $z_t\in \mathcal F(x_0, \ldots, x_t)$. For any $t \geq 0$ we write
\begin{equation*}
\Vert x_{t+1} - z_t \Vert^2 - \Vert x_{t} - z_t \Vert^2 
=
\Vert x_{t+1} - x_t \Vert^2 + 2 \langle x_{t+1} - x_t, x_t - z_t \rangle
=
\gamma^2 \Vert \nabla f_{i_t}(x_t) \Vert^2 + 2 \gamma \langle \nabla f_{i_t}(x_t), z_t - x_t \rangle.
\end{equation*}
Since each $f_i$ is convex, we can write
\begin{equation*}
    \Vert x_{t+1} - z_t \Vert^2 - \Vert x_{t} - z_t \Vert^2
    \leq 
    \gamma^2 \Vert \nabla f_{i_t}(x_t) \Vert^2
    + 2 \gamma \left( f_{i_t}(z_t) - f_{i_t}(x_t) \right).
\end{equation*}
Taking the expectation conditioned to $x_t$ 
and exploiting the fact that $z_t$ is independent from $x_t$ we obtain
\begin{equation}
    \label{e:li0}
    \mathbb{E}_t\Vert x_{t+1} - z_t \Vert^2 - \Vert x_{t} - z_t \Vert^2 
    \leq 
    \gamma^2 \mathbb{E}_t\Vert \nabla f_{i_t}(x_t) \Vert^2
    + 2 \gamma \left( f(z_t) - f(x_t) \right).
\end{equation}
The variance transfer Lemma \ref{L:variance transfer} states that
\begin{equation*}
\mathbb{E}\Vert \nabla f_{i_t}(x_t) \Vert^2 \leq 2(1+\varepsilon)L(f(x_t) - \inf f) + (1+\varepsilon^{-1}) \sigma_*^2,
\end{equation*}
for every $\varepsilon > 0$. After dividing by $2 \gamma$, our bound \eqref{e:li0} becomes
\begin{eqnarray*}
\frac{1}{2 \gamma}\mathbb{E}_t \Vert x_{t+1} - z_t \Vert^2 - \frac{1}{2 \gamma} \Vert x_{t} - z_t \Vert^2 
 \leq 
f(z_t) - f(x_t) 
+ \gamma (1+\varepsilon)L(f(x_t) - \inf f) 
+ \frac{\gamma (1+\varepsilon^{-1}) \sigma_*^2}{2}.
\end{eqnarray*}
Reorganizing terms, the above can be rewritten as
\begin{equation*} \label{E:abc_Exp}
    \Expt{a f(x_t) + b f(z_t) + c \inf f }
\leq 
\frac{1}{2 \gamma} \Expt{\Vert x_t - z_t \Vert^2} - \frac{1}{2 \gamma} \Expt{\Vert x_{t+1} - z_t \Vert^2} + v, 
\end{equation*}
where
\begin{equation*}\label{e:li abc constants short stepsizes}
    a = 1 - \gamma L (1+\varepsilon), \quad
    b = -1, \quad
    c = \gamma L (1+\varepsilon) \quad\hbox{and}\quad 
    v = \frac{\gamma (1+\varepsilon^{-1}) \sigma_*^2}{2}.
\end{equation*}
It is immediate from their definition that $a+b+c=0$.
On the other hand, we can make $a > 0$ if $\gamma L <1$, and $\varepsilon$ is taken small enough. A suitable choice is
\begin{equation*}
\varepsilon = \frac{1- \gamma L}{1+ \gamma L} 
\quad \Rightarrow \quad
a = 1 - \gamma L (1+ \varepsilon) = 1 - \frac{2 \gamma L}{1+\gamma L}= \frac{1- \gamma L}{1+ \gamma L}\quad \text{and}\quad v = \frac{\gamma \sigma^2_*}{1 - \gamma L}.
\end{equation*}
\end{proof}

\subsection{Lemma: From bounds on function values to last-iterate results}

The following result summarizes the technique used by \cite{zamani_exact_2023} and \cite{liu_revisiting_2023} in a slightly more general framework. The proof is heavily inspired by their original arguments.

\begin{lemma}\label{lemma:technical2}
    Let $f_i$ be convex and $L$-smooth, and let $(x_t)_{t=0}^T$ be generated by SGD with constant step-size $\gamma$ for $T\ge 1$.
    Suppose there exist $a,b,c\in \R$ with $-a< b\le 0$ and $a+b+c=0$ and $v\in \R_{\ge 0}$, such that it holds that for $t=0, \ldots, T$ and for all $z_t\in \mathcal H$ contained in $\mathcal F(x_0, \ldots, x_t)$,
    \begin{equation}\label{e:li1_new2}
    \mathbb{E}\left[ a f(x_t) + b f(z_t) + c \inf f  \right]
    \leq 
    \frac{1}{2 \gamma} \mathbb{E} \Vert x_t - z_t \Vert^2 - \frac{1}{2 \gamma} \mathbb{E} \Vert x_{t+1} - z_t \Vert^2 + v.
    \end{equation}
    Then it holds true that 
    \[
        \Exp{f(x_T)-\inf f}\le 
        \frac{\|x_0-x_*\|^2}{ 2\gamma a \alpha_{T-1}} + v \frac{\alpha_{T-1}+\sum_{t=0}^{T-1} \alpha_t}{a \alpha_{T-1}},
    \]
    where $(\alpha_t)$ is defined as $\alpha_{-1}=1$ and 
    \begin{equation*}
    \alpha_t = \frac{T-t+1}{T-t + 1+\tfrac{a}{b}} \cdot \alpha_{t-1} \quad \text{for $t=1, \ldots, T-1$}.
    \end{equation*}
\end{lemma}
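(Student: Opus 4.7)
The plan is to adapt the Zamani-Glineur telescoping technique, implemented in the stochastic setting by \cite{liu_revisiting_2023}, to this slightly more general framework. The idea is to apply the hypothesized inequality \eqref{e:li1_new2} at each time $t \in \{0, \ldots, T-1\}$ with a carefully chosen adapted point $z_t$, and then combine the resulting inequalities into a single bound by taking a weighted sum with weights $(\alpha_t)$.

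The natural choice for each $t$ is $z_t = \mu_t x_t + (1-\mu_t) x_*$, for some $\mu_t \in [0,1]$ to be tuned. Since $b \leq 0$ and $f$ is convex, we have $b\,\mathbb{E}[f(z_t)] \geq b \mu_t \,\mathbb{E}[f(x_t)] + b(1-\mu_t) \inf f$. Plugging this into \eqref{e:li1_new2} and using $a+b+c = 0$, the terms in $\inf f$ collapse, yielding
\[
(a + b \mu_t)\, \mathbb{E}[f(x_t) - \inf f] \leq \frac{1}{2\gamma}\, \mathbb{E}\|x_t - z_t\|^2 - \frac{1}{2\gamma}\, \mathbb{E}\|x_{t+1} - z_t\|^2 + v.
\]
The first distance term equals $(1-\mu_t)^2 \|x_t - x_*\|^2$, and the second, $\|(x_{t+1} - x_*) - \mu_t (x_t - x_*)\|^2$, I would lower-bound via a Young-type inequality to express it as a combination of $\|x_{t+1} - x_*\|^2$ and $\|x_t - x_*\|^2$, producing a telescoping pattern across $t$.

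The weights $(\alpha_t)$ are then calibrated precisely so that the weighted sum $\sum_{t=0}^{T-1} \alpha_t \cdot (\text{inequality at } t)$ yields, on the left-hand side, only the single term $a \alpha_{T-1} \,\mathbb{E}[f(x_T) - \inf f]$ rather than a weighted average of all the $f(x_t)$. This is where the recursion $\alpha_t(T - t + 1 + a/b) = (T-t+1) \alpha_{t-1}$ enters: it is designed so that, after invoking convexity one more time to compare $\mathbb{E}[f(x_t)]$ with $\mathbb{E}[f(x_T)]$ and adding a final boundary application of \eqref{e:li1_new2} at $t = T-1$ (which accounts for the extra $v \alpha_{T-1}$ term), the intermediate function-value contributions cancel out in pairs. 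Meanwhile, on the right-hand side, the quadratic terms telescope to $\frac{1}{2\gamma} \|x_0 - x_*\|^2$ after discarding the nonnegative $\|x_T - x_*\|^2$, and the variance contributions accumulate to $v \sum_{t=0}^{T-1} \alpha_t$. Dividing by $a \alpha_{T-1}$ gives the stated bound.

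The main obstacle is the simultaneous calibration of $\mu_t$ and $\alpha_t$: both the telescoping of the function-value terms (which must collapse to a single contribution at $t = T$) and the telescoping of the distance terms must work out under the same recursion. In particular, the Young parameter used in bounding $\|x_{t+1} - z_t\|^2$ needs to be tuned so that the coefficients multiplying $\|x_t - x_*\|^2$ and $\|x_{t+1} - x_*\|^2$ line up with the weight ratio $\alpha_t / \alpha_{t-1}$ prescribed by the recursion. Once this algebraic matching is in place, the rest of the computation is routine summation.
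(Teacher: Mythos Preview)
There is a genuine gap in your plan, and it sits exactly at the point you flag as the ``main obstacle.'' With your choice $z_t=\mu_t x_t+(1-\mu_t)x_*$, the inequality at time $t$ involves only $f(x_t)$ on the function-value side; after summing with weights $\alpha_t$ you obtain $\sum_{t}\alpha_t(a+b\mu_t)\,\mathbb E[f(x_t)-\inf f]$, a positive combination of \emph{all} the $r_t$'s. No cross-terms between different times appear, so nothing can ``cancel out in pairs,'' and convexity gives you no way to compare $f(x_t)$ with $f(x_T)$. To isolate $r_T$ you would need $a+b\mu_t=0$ for $t<T$, i.e.\ $\mu_t=-a/b>1$ (since $-a<b\le 0$), which destroys the convex-combination structure and hence the Jensen step. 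In short, this choice of $z_t$ yields at best an ergodic bound, not a last-iterate one.

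The mechanism you are missing is that $z_t$ must carry \emph{all} past iterates, not just $x_t$. The paper defines $z_{-1}=x_*$ and $z_t=(1-p_t)x_t+p_t z_{t-1}$, so that unrolled, $z_t$ is a convex combination of $x_0,\dots,x_t$ and $x_*$ with weights $(\alpha_s-\alpha_{s-1})/\alpha_t$ and $1/\alpha_t$. Jensen applied to $b f(z_t)$ then produces, for each $t$, a sum over $s\le t$; after summing in $t$ and swapping the order of summation, the coefficient of $r_t$ for $t<T$ is $a\alpha_t+b(\alpha_t-\alpha_{t-1})(T-t+1)$, and the recursion on $\alpha_t$ is chosen exactly to make this zero. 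On the distance side no Young inequality is needed: one has $\|x_t-z_t\|^2=p_t^2\|x_t-z_{t-1}\|^2\le p_t\|x_t-z_{t-1}\|^2$, and with $\alpha_t p_t=\alpha_{t-1}$ the terms $\alpha_t\|x_{t+1}-z_t\|^2$ telescope directly. Your proposed calibration of a Young parameter against the weight ratio is a symptom of having taken $z_t$ too simple; once $z_t$ is the running convex combination above, the two telescoping requirements are automatically compatible.
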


\begin{proof}
We first wish to sum Inequality \eqref{e:li1_new2} over $t=0, \dots, T-1$ in a way that the right-hand side terms cancel each other.
To this end, assume first that $z_t \in [ x_t, z_{t-1}]$. Indeed, if $z_t = (1-p_t) x_t + p_t z_{t-1}$, with $p_t\in [0, 1]$, then 
\begin{equation*}
\Vert x_t - z_t \Vert^2 
=
\Vert p_t x_t - p_t z_{t-1} \Vert^2 = p_t^2 \Vert x_t - z_{t-1} \Vert^2
\leq 
p_t \Vert x_t - z_{t-1} \Vert^2.
\end{equation*}
After multiplication by $\alpha_t\ge 0$, Inequality \eqref{e:li1_new2} gives
\begin{equation*}
\alpha_t \mathbb{E}\left[ a f(x_t) + b f(z_t) + c \inf f  \right]
\leq 
\frac{1}{2 \gamma} \alpha_t p_t\mathbb{E}_t \Vert x_t - z_{t-1} \Vert^2 - \frac{1}{2 \gamma} \alpha_t \mathbb{E}_t \Vert x_{t+1} - z_t \Vert^2 + \alpha_t v.
\end{equation*}
Note that in order for this to hold for all $t\ge 0$, we must define $z_{-1}$, which we take to be $z_{-1}\coloneqq x_*$.
Assume that the sequence $(\alpha_t)$ is defined recursively starting from $\alpha_{-1}\coloneqq 1$, while verifying the relationship $\alpha_t p_t = \alpha_{t-1}$ for $t\ge 1$. Since $p_t \in [0,1]$, the sequence $\alpha_t$ is positive and nondecreasing.
Such choice of $\alpha_t$ leads to 
\begin{equation*}
\alpha_t \mathbb{E}\left[ a f(x_t) + b f(z_t) + c \inf f  \right]
\leq 
\frac{1}{2 \gamma} \alpha_{t-1}\mathbb{E}_t \Vert x_t - z_{t-1} \Vert^2 - \frac{1}{2 \gamma} \alpha_t \mathbb{E}_t \Vert x_{t+1} - z_t \Vert^2 + \alpha_t v,
\end{equation*}
which may now be summed
from $0$ to $T$ to obtain
\begin{equation*}
\sum_{t=0}^{T} \alpha_t \mathbb{E}\left[ a f(x_t) + b f(z_t) + c \inf f  \right]
\leq  
\frac{1}{2 \gamma} \alpha_{-1} \mathbb{E} \Vert x_0 - z_{-1} \Vert^2 - \frac{1}{2 \gamma} \alpha_{T} \mathbb{E} \Vert x_{T+1} - z_{T} \Vert^2 + v \sum_{t=0}^{T} \alpha_t.
\end{equation*}
Drop the negative term on the right-hand side, and recall that $\alpha_{-1}=1$ and $z_{-1}=x_*$. We are lead to
\begin{equation}\label{e:li2}
\sum_{t=0}^{T} \alpha_t \mathbb{E}\left[ a f(x_t) + b f(z_t) + c \inf f  \right]
\leq 
\frac{1}{2 \gamma}  
\mathbb{E} \Vert x_0 - x_* 
\Vert^2  + v \sum_{t=0}^{T} \alpha_t .
\end{equation}
We previously assumed that $z_t = (1-p_t) x_t + p_t z_{t-1}$ for some $p_t \in [0,1]$.
Unrolling this relationship yields
\begin{eqnarray*}
z_t &=& (1-p_t) x_t + p_t z_{t-1} \\
&=& (1-p_t) x_t + p_t (1-p_{t-1})x_{t-1} + p_t p_{t-1} z_{t-2} \\
& \vdots & \\
&=& \left( \sum_{s=0}^t p_t \dots p_{s+1}(1-p_s)  x_s \right) + (p_t p_{t-1} \dots p_0) z_{-1}.
\end{eqnarray*}
Now we will use the fact that $p_t = \tfrac{\alpha_{t-1}}{\alpha_t}$ to write
\begin{equation*}
z_t = \left( \sum_{s=0}^t \frac{\alpha_{s} - \alpha_{s-1}}{\alpha_{t}}   x_s \right) + \frac{\alpha_{-1}}{\alpha_t} z_{-1}=\left( \sum_{s=0}^t \frac{\alpha_{s} - \alpha_{s-1}}{\alpha_{t}}   x_s \right) + \frac{1}{\alpha_t} x_*.
\end{equation*}
Since $(\alpha_t)$ is nondecreasing and
\begin{equation*}
\frac{1}{\alpha_t} + \sum_{s=0}^t \frac{\alpha_{s} - \alpha_{s-1}}{\alpha_{t}}
=
\frac{1}{\alpha_t} \left( 1 + \sum_{s=0}^t (\alpha_{s} - \alpha_{s-1}) \right) = 1,
\end{equation*}
so that $z_t$ is a convex combination of $x_0, \ldots, x_t$ and $x_*$.

As such, we may upper bound $f(z_t)$ using Jensen's inequality as
\begin{equation*}
f(z_t) \le \left( \sum_{s=0}^t \frac{\alpha_{s} - \alpha_{s-1}}{\alpha_{t}}   f(x_s) \right)+\frac{1}{\alpha_t} f(x_*)=\left( \sum_{s=0}^t \frac{\alpha_{s} - \alpha_{s-1}}{\alpha_{t}}   f(x_s) \right)+\frac{1}{\alpha_t} \inf f.
\end{equation*}
We now recall that $b\le 0$, that $a+b+c=0$, and introduce the notation $r_t \coloneqq f(x_t) - \inf f$, so that
\begin{align*}
 \sum_{t=0}^{T} \alpha_t \left[ a f(x_t) + b f(z_t) + c \inf f  \right]
& \geq 
\sum_{t=0}^{T} \alpha_t \left[ a f(x_t) + b\left( \sum_{s=0}^t \frac{\alpha_{s} - \alpha_{s-1}}{\alpha_{t}}   f(x_s) \right) + b \frac{1}{\alpha_t} \inf f  + c \inf f  \right] \\
& = 
\sum_{t=0}^{T}  \left[ \alpha_ta f(x_t) + b\left( \sum_{s=0}^t ({\alpha_{s} - \alpha_{s-1}}) f(x_s) \right) + b \inf f + \alpha_t c \inf f  \right] \\
& = 
\sum_{t=0}^{T}  \left[ a \alpha_t r_t + b \left( \sum_{s=0}^t ({\alpha_{s} - \alpha_{s-1}}) r_s \right) \right] \\
& = 
\sum_{t=0}^{T} a \alpha_t r_t + b \sum_{t=0}^{T} ({\alpha_{t} - \alpha_{t-1}})  (T-t+1) r_t \\
& = 
\sum_{t=0}^{T} r_t \big( a \alpha_t  + b ({\alpha_{t} - \alpha_{t-1}}) (T-t+1) \big).
\end{align*}
In the last sum, we wish to make the coefficient in front of $r_T$ positive and all the remaining zero. Specifically, we assume $\alpha_{T-1}>\frac{a+b}{b}\alpha_T$ and, for all $t = 0, \dots, T-1$:
\begin{equation}\label{eq:relationship}
 a\alpha_t  = -b({\alpha_{t} - \alpha_{t-1}}) (T-t+1).
\end{equation}
Once we do this, and in view of Inequality \eqref{e:li2}, we will have proved that
\begin{equation*}
\left( (a+b) \alpha_T -b\alpha_{T-1} \right) r_T 
\leq
\frac{\|x_0-x_*\|^2}{2 \gamma} + v \sum_{t=0}^T \alpha_t.
\end{equation*}
Setting $\alpha_T=\alpha_{T-1}$, we get
\begin{equation*}\label{Proof::Pre}
r_T 
\leq
\frac{\|x_0-x_*\|^2}{ 2\gamma a \alpha_{T-1}} + v \frac{\alpha_{T-1}+\sum_{t=0}^T \alpha_t}{a \alpha_{T-1}}.
\end{equation*}
The relation for $\alpha_t$ in Equation \eqref{eq:relationship} can be rewritten as
\begin{equation*}
\alpha_t = \frac{T-t+1}{T-t + 1+\tfrac{a}{b}} \cdot \alpha_{t-1}.
\end{equation*}
Note that since $\tfrac{a}{b} \le 0$, $\alpha_t$ is increasing, which is consistent with the previous requirements.
\end{proof}

\subsection{Proof of Theorem \ref{thm:main}: Last iterates for generic step-size}\label{sec:mainproof}

Applying Lemmas \ref{lemma:tech1} and \ref{lemma:technical2}, we obtain 
\[
        \Exp{f(x_T)-\inf f}\le 
        \frac{\Exp{\|x_0-x_*\|^2}}{ 2\gamma a \alpha_{T-1}} + v \frac{\alpha_{T-1}+\sum_{t=0}^{T-1} \alpha_t}{a \alpha_{T-1}},
\]
where $\phi=1+\frac{a}{b}\in [0, 1]$ and $(\alpha_t)$ is defined as $\alpha_{-1}=1$ and 
\begin{equation*}
    \alpha_t = \frac{T-t+1}{T-t + 1+\tfrac{a}{b}} \cdot \alpha_{t-1}.
\end{equation*}
Combining Lemma \ref{lemma:gamma} and Lemma \ref{lemma:exponent}, we obtain 
\[
    \alpha_{T-1}\ge \frac{(T+1)^{1-\phi}}{2}\ge \frac{T^{1-\phi}}{2}\quad \text{and}\quad \frac{\alpha_T+\sum_{t=0}^{T-1} \alpha_t}{\alpha_{T-1}} \le 2\left(1+ \frac{T^\phi-1}{\phi}\right)+\frac{\alpha_T}{\alpha_{T-1}}\le 4 T^\phi\ln(T+1),
\]
such that
\[
    \Exp{f(x_T)-\inf f}\le \frac{\Exp{\|x_0-x_*\|^2}}{\gamma a T^{1-\phi}} + \frac{4vT^\phi\ln(T+1)}{a},
\]
where
\[
    \phi=1+\frac{a}{b}=\frac{2\gamma L}{1+\gamma L}\in (0, 1).
\]
Since
\[
    a=\frac{1-\gamma L}{1+\gamma L}\ge \frac{1-\gamma L}{2}\quad \text{and}\quad v = \frac{\gamma \sigma^2_*}{1 - \gamma L},
\]
we obtain the bound 
\[
    \Exp{f(x_T)-\inf f}\le \frac{2\Exp{\|x_0-x_*\|^2}}{\gamma (1-\gamma L) T^{1-\phi}} + \frac{8\gamma T^\phi\ln(T+1)}{(1-\gamma L)^2}\cdot \sigma_*^2.
\]

\subsection{Proof of Corollaries \ref{coro:allalpha} and \ref{coro:alpha2}: Last-iterate for polynomial step-size}\label{coro:allalpha:proof}

We adopt the notation from the proof of Theorem \ref{thm:main} and, for now, assume that $\gamma L\le 1/2$. Recalling that $\phi\le 2\gamma L$, the bound from Theorem \ref{thm:main} gives
\begin{equation} \label{E:corollaries_gamma}
    \Exp{f(x_T)-\inf f}\le \frac{4D^2}{\gamma T^{1-2\gamma L}} + 32\gamma T^{2\gamma L}\ln(T+1)\cdot \sigma_*^2.
\end{equation}

Suppose now that $\gamma = \frac{1}{CLT^\beta}$, with $C\ge 2$ (so that $\gamma L\le\frac{1}{2}$). Since $e\beta\ln(T)\le T^\beta$, we have
\[
    2\gamma L \ln T\le \frac{2}{e\beta C},
\]
whence
\begin{equation*} \label{E:Tphi_bounded}
   T^{2\gamma L}=\exp(2\gamma L \ln T)\le \exp\left(\frac{2}{e\beta C}\right)=:B. 
\end{equation*}
As a consequence,
$$\frac{1}{{\gamma} T^{1-{2\gamma L}}}=\frac{T^{2\gamma L}}{\gamma  T}\le \frac{BCL}{T^{1-\beta}}.$$
On the other hand, 
$$\gamma T^{2\gamma L} \le \frac{B}{CLT^\beta}.$$
Inequality \eqref{E:corollaries_gamma} then implies
\begin{equation} \label{E:corollaries}
    \Exp{f(x_T)-\inf f}\le \frac{4BCLD^2}{T^{1-\beta}} + \frac{32B\ln(T+1)}{CLT^\beta}\cdot \sigma_*^2,
\end{equation}
which proves Corollary \ref{coro:allalpha}. 
Finally, if $\beta=\tfrac12$, we conclude that
\begin{equation*} \label{E:Corollary_sqrtT}
    \Exp{f(x_T)-\inf f}\le \frac{4BCLD^2}{\sqrt{T}} + \frac{32 B \ln(T+1)}{CL\sqrt{T}}\cdot \sigma_*^2,   
\end{equation*}
where $B=\exp(\frac{4}{eC}) \leq \exp(\frac{2}{e}) < 2.09$, 
whence
\begin{equation*}
    \Exp{f(x_T)-\inf f}\le \frac{9CLD^2}{\sqrt{T}} + \frac{67 \ln(T+1)}{CL\sqrt{T}}\cdot \sigma_*^2,   
\end{equation*}

proving Corollary \ref{coro:alpha2}. For $C=2$, we can write
\begin{equation*} \label{E:Corollary_sqrtT_C2}
    \Exp{f(x_T)-\inf f}\le \frac{17LD^2}{\sqrt{T}} + \frac{34 \ln(T+1)}{L\sqrt{T}}\cdot \sigma_*^2.   
\end{equation*}

\subsection{Proof of Corollary \ref{coro:complexity}: Complexity bounds}\label{coro:allalpha:proof2}

For any $T \ge 1$, we have 
\begin{align*}
    \frac{1}{\varepsilon^2} &\leq \frac{T}{\max \Big\{18 L\Exp{\|x_0 - x_*\|^2}, \frac{67 \sigma^2_*}{2L}\Big\}^2 (1 + \ln (T+1))^2}
    \leq \frac{T}{\Big( 18 L\Exp{\|x_0 - x_*\|^2} + \frac{67 \sigma^2_*}{2L} \ln (T+1) \Big)^2},
\end{align*}
or, equivalently,
\begin{align*}
    \frac{18 L\Exp{\|x_0 - x_*\|^2} + \frac{67 \sigma^2_*}{2L} \ln (T+1)}{\sqrt{T}} \leq \varepsilon.
\end{align*}
From Corollary \ref{coro:alpha2}, it holds that $\Exp{f(x_T) - \inf f} \leq \varepsilon$, as wanted. Moving on to the second point, we are going to prove that $T \geq \tfrac{K'}{\varepsilon^{\beta}}$ is enough to reach an $\varepsilon$ precision, for some $K'$, where $\beta >2$.
Since $\beta >2$, there exists some $\alpha \in (0, 1/2)$ such that $\beta = 2/(1-2\alpha)$. 
If $T \geq \tfrac{K'}{\varepsilon^\beta}$, and defining $P = (K')^{1-2 \alpha}$, we have
\begin{equation*}
    \frac{P^{1/(1-2\alpha)}}{\varepsilon^{2/(1-2\alpha)}} \leq T 
    \iff  \frac{P}{\varepsilon^2} \leq T^{1-2\alpha} .
\end{equation*}
But we can write, using the fact that $T \geq 3$ :
\begin{equation*}
    1 + \ln(T+1) \leq 1 + 2 \ln(T) \leq 3 \ln(T) \leq \frac{3}{e \alpha} T^\alpha.
\end{equation*}
Therefore
\begin{equation*}
    T^{2 \alpha} \geq (1 + \ln(T+1))^2 \left( \frac{e \alpha}{3} \right)^2
    \quad \text{ and } \quad 
    \frac{1}{T^{2 \alpha}} \leq \frac{1}{(1 + \ln(T+1))^2 }\left( \frac{3}{e \alpha} \right)^2.
\end{equation*}
So we now have that
\begin{equation*}
    \frac{1}{\varepsilon^2} \leq \frac{T}{(1 + \ln(T+1))^2 }\left( \frac{3}{e \alpha} \right)^2 \frac{1}{P}.
\end{equation*}
Let us now assume that $P$ is such that 
\begin{equation*}
    \left( \frac{3}{e \alpha} \right)^2 \frac{1}{P} = \frac{1}{K^2}.
\end{equation*}
In that case
\begin{align*}
    \frac{1}{\varepsilon^2} \leq
    \frac{1}{K^2} \tfrac{T}{(1+\ln(T+1))^2}
    \leq 
    \frac{T}{\Big( 18 L \Exp{\|x_0 - x_*\|^2} + \frac{67 \sigma^2_*}{2L} \ln(T+1) \Big)^2},
\end{align*}
or, equivalently,
\begin{align*}
    \frac{18 L\Exp{\|x_0 - x_*\|^2} + \frac{67 \sigma^2_*}{2L} \ln(T+1)}{\sqrt{T}} \leq \varepsilon,
\end{align*}
and we conclude as previously.
So all we needed was to take
\begin{equation*}
    P = \left( \frac{3K}{e \alpha} \right)^2 
    \quad \text{ and } \quad 
    K' = \left( \frac{3K}{e \alpha} \right)^\beta.
\end{equation*}

\section{Conclusion}

In this paper, we provide the first last-iterate bounds for SGD without making a uniform variance assumption, and achieve a near-optimal complexity bound of $O(\tfrac{\ln T}{\sqrt T})$ with a step-size $\gamma\simeq \frac{1}{\sqrt T}$. 
We acknowledge the parallel work by \cite{attia_fast_2025}, who study the same problem and obtain similar results, and was made publicly available a few days prior to ours.

This new result creates opportunities for interesting possible extensions.
For instance, it is yet unknown if it is possible to obtain high-probability last-iterate bounds with no uniform gradient assumption, improving on the recent results in \cite{harvey_tight_2018,jain_making_2019, liu2023highprobabilityconvergencestochastic}. 
Moreover, it is not clear if the logarithmic dependency of our bounds is optimal.
More generally, a promising avenue could be to apply the performance estimation framework to characterize the worst-case bound, as was initiated in \cite{taylor_stochastic_2019} and \cite{cortild_new_2025}.


{
\small
\bibliographystyle{apalike}
\bibliography{references}
}

\newpage

\appendix

\section{Appendix: Technical inequalities}

Most of our bounds involve complicated expressions that we want to simplify. Here are small tools that we need.

\begin{lemma}[Simplifying inequalities]\label{lemma:exponent}
    For every $t\ge 1$ and $\theta>0$, we have
    $$3+2\left(\frac{t^\theta-1}{\theta}\right)\le 4t^\theta\ln(t+1).$$
\end{lemma}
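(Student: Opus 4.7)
The plan is to reduce the claim, in two steps, to a tight elementary inequality in $t$ alone.

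First I would establish the pointwise auxiliary bound
\begin{equation*}
\frac{t^\theta - 1}{\theta} \le t^\theta \ln t
\qquad (t \ge 1,\ \theta > 0).
\end{equation*}
This is immediate from the identity $t^\theta - 1 = (\ln t)\int_0^\theta t^s\, ds$ together with the monotonicity $t^s \le t^\theta$ for $s \in [0,\theta]$ when $t \ge 1$, and is equivalently the convexity of $s \mapsto t^s$.

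Plugging this into the statement reduces the target to
\begin{equation*}
3 \le 2 t^\theta \bigl( 2 \ln(t+1) - \ln t \bigr).
\end{equation*}
Since $t^\theta \ge 1$ for $t \ge 1$ and $\theta \ge 0$, it suffices to show $\psi(t) \coloneqq 4 \ln(t+1) - 2 \ln t \ge 3$ on the relevant range of $t$. One computes $\psi'(t) = \tfrac{4}{t+1} - \tfrac{2}{t} = \tfrac{2(t-1)}{t(t+1)} \ge 0$ on $[1,\infty)$, so $\psi$ is nondecreasing, and $\psi(2) = \ln(81/4) > \ln(e^3) = 3$ since $81/4 = 20.25 > e^3 \approx 20.086$. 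This closes the argument for $t \ge 2$.

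The main obstacle is purely one of numerical tightness. The worst case of the whole argument is $t = 2$, $\theta \to 0^+$, where everything collapses to $81/4 > e^3$ with a sub-percent margin; in particular the Step~1 bound $\tfrac{t^\theta - 1}{\theta} \le t^\theta \ln t$ cannot be replaced by any coarser estimate of the right order. I note, moreover, that the inequality as literally written fails at $t = 1$ (where $\mathrm{LHS} = 3 > 4 \ln 2 \approx 2.77 = \mathrm{RHS}$), so the intended hypothesis should presumably read $t \ge 2$; this is consistent with the lemma being invoked in the proof of Theorem~\ref{thm:main} only for $t = T \ge 3$.
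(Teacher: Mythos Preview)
Your proof is correct for $t \ge 2$, and you have in fact caught an error in the paper. The lemma as stated is false at $t=1$: there $\mathrm{LHS}=3$ while $\mathrm{RHS}=4\ln 2\approx 2.77$. The paper's own proof defines $\psi(t)=3+\tfrac{2(t^\theta-1)}{\theta}-4t^\theta\ln(t+1)$, shows $\psi'(t)\le 0$ for $t\ge 1$, and then asserts $\psi(1)=3-4\ln 2<0$; but $3-4\ln 2\approx 0.23>0$, so that final step is simply an arithmetic slip. Your observation that the lemma is only invoked with $t=T\ge 3$ is exactly right, and restricting to $t\ge 2$ repairs everything.

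Your route is genuinely different from the paper's. The paper keeps $\theta$ fixed and differentiates the full expression in $t$, using the sign of $\tfrac{2}{t}-\tfrac{4}{t+1}$ to get monotonicity. You instead first eliminate $\theta$ via the sharp bound $\tfrac{t^\theta-1}{\theta}\le t^\theta\ln t$ (which is equality at $\theta\to 0^+$), then use $t^\theta\ge 1$ to reduce to the one-variable inequality $4\ln(t+1)-2\ln t\ge 3$. This buys you transparency about where the tightness lies: the worst case is $\theta\to 0^+$, $t=2$, where the claim becomes $81/4>e^3$, and you correctly note that no slack in Step~1 would survive. The paper's monotonicity argument obscures this and, had the arithmetic been done correctly, would have required checking $\psi$ at some $t_0>1$ anyway, at which point the $\theta$-dependence of $\psi(t_0)$ would resurface. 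Your decomposition is cleaner here.
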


\begin{proof}
    Define $\psi(t)=3+2\left(\frac{t^\theta-1}{\theta}\right)- 4t^\theta\ln(t+1)$, so that $$\psi'(t)=2t^{\theta-1} -4\theta t^{\theta-1}\ln(t+1)-\frac{4t^\theta}{t+1} \le t^\theta\left(\frac{2}{t}-\frac{4}{t+1}\right)\le 0$$ 
    for every $t\ge 1$. Since $\psi(1)=3-4\ln(2)<0$, this implies $\psi(t)< 0$ for every $t\ge 1$, as claimed.
\end{proof}

\begin{lemma}\label{lemma:Exp}
    If $x\in [0, a]$, where $a> 0$, then it holds that
    \[
        \exp(x)\le x\cdot \frac{\exp(a)-1}{a}+1.
    \]
\end{lemma}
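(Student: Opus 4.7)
The inequality is the standard fact that a convex function lies below its chords. My plan is to exploit this observation directly on the function $\exp$ restricted to the interval $[0,a]$.

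First I would introduce the auxiliary function
\[
    \psi(x) \coloneqq \exp(x) - x \cdot \frac{\exp(a) - 1}{a} - 1,
\]
which is the difference between $\exp$ and its chord joining $(0,1)$ and $(a, \exp(a))$. By direct computation, $\psi(0) = 0$ and $\psi(a) = 0$.

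Next, I would compute $\psi''(x) = \exp(x) > 0$ for all $x \in \mathbb{R}$, showing that $\psi$ is strictly convex. A convex function whose values at two points are both zero satisfies $\psi(x) \leq (1-t)\psi(0) + t\psi(a) = 0$ for every $x = (1-t) \cdot 0 + t \cdot a$ with $t \in [0,1]$, which covers the entire interval $[0,a]$. Rearranging $\psi(x) \leq 0$ gives the claimed inequality.

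There is no real obstacle here; the whole argument is a one-line application of convexity. The only minor care needed is to write out the chord interpretation correctly and to verify the endpoint values $\psi(0) = \psi(a) = 0$. Alternatively, one could give the same proof by noting that any convex function $\varphi$ satisfies $\varphi((1-t) u + t v) \leq (1-t)\varphi(u) + t\varphi(v)$, and specializing to $\varphi = \exp$, $u = 0$, $v = a$, $t = x/a$; this yields the inequality immediately without defining $\psi$ at all.
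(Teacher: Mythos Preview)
Your proof is correct and follows essentially the same approach as the paper: both rely on the convexity of $\exp$ to bound it by its chord on $[0,a]$, and the alternative you mention at the end (taking $\varphi=\exp$, $u=0$, $v=a$, $t=x/a$) is precisely the argument the paper gives.
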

\begin{proof}
    From convexity of $\exp$ between points $0$ and $a$, we have for any $\alpha \in [0, 1]$
    \begin{align*}
        \exp(\alpha a) \leq \alpha \exp(a) + (1-\alpha)\exp(0)
    \end{align*}
    Set $x = \alpha a$, such that
    \begin{align*}
        \exp(x) \leq \frac{x}{a} \exp(a) + (1-\frac{x}{a}) \exp(0) = x \frac{\exp(a) - 1}{a} + 1.
    \end{align*}
    Since this is true for any $\alpha \in [0, 1]$, this is true for any $x \in [0, a]$.
\end{proof}

\begin{lemma}\label{lemma:gamma}
    For a fixed $T\ge 2$ and $\phi\in (0, 1]$, define $(\alpha_t)_{t=0}^{T-1}$ through $\alpha_{-1}=1$ and, for $t=0, \ldots, T$,
    $$\alpha_t=\frac{T-t+1}{T-t+\phi}\cdot \alpha_{t-1}.$$
    Then it holds that
    \[
        \alpha_{T-1}\ge \frac{(T+1)^{1-\phi}}{2}\quad \text{and}\quad \frac{\sum_{t=0}^{T-1} \alpha_t}{\alpha_{T-1}} \le 2\left(1+ \frac{T^\theta-1}{\theta}\right).
    \]
\end{lemma}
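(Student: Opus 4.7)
The plan is to put $\alpha_t$ in closed form and then reduce both bounds to the log-convexity of the Gamma function. Unrolling the recursion, after the change of index $k=T-s$, gives
\[
\alpha_t = \prod_{k=T-t}^{T}\frac{k+1}{k+\phi} = \frac{(T+1)!\,\Gamma(T-t+\phi)}{(T-t)!\,\Gamma(T+1+\phi)},
\]
and in particular $\alpha_{T-1} = (T+1)!\,\Gamma(1+\phi)/\Gamma(T+1+\phi)$. The main ingredient will be the classical consequence of log-convexity of $\Gamma$: for $\phi\in[0,1]$ and $x>0$,
\[
\Gamma(x+\phi)\le \Gamma(x)^{1-\phi}\,\Gamma(x+1)^{\phi} = x^{\phi}\,\Gamma(x).
\]

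For the first claim, I would apply this inequality with $x=T+1$, which yields $\Gamma(T+1+\phi)\le (T+1)^{\phi}\,T!$, whence
\[
\alpha_{T-1} \ge \frac{(T+1)!}{(T+1)^{\phi}\,T!}\,\Gamma(1+\phi) = (T+1)^{1-\phi}\,\Gamma(1+\phi).
\]
The target then reduces to the standard fact that $\Gamma(1+\phi)\ge 1/2$ for $\phi\in[0,1]$, since the minimum of $\Gamma$ on $[1,2]$ is $\approx 0.886$.

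For the second claim, the closed form produces the telescoped ratio $\alpha_t/\alpha_{T-1} = \Gamma(T-t+\phi)/\bigl((T-t)!\,\Gamma(1+\phi)\bigr)$, so after the substitution $j=T-t$,
\[
\sum_{t=0}^{T-1}\frac{\alpha_t}{\alpha_{T-1}} = \frac{1}{\Gamma(1+\phi)}\sum_{j=1}^{T}\frac{\Gamma(j+\phi)}{\Gamma(j+1)}.
\]
I would apply log-convexity once more with $x=j$ to get $\Gamma(j+\phi)/\Gamma(j+1)=\Gamma(j+\phi)/\bigl(j\,\Gamma(j)\bigr)\le j^{\phi-1}$, and then finish with the standard integral comparison (valid since $x\mapsto x^{\phi-1}$ is nonincreasing on $(0,\infty)$ for $\phi\le 1$)
\[
\sum_{j=1}^{T} j^{\phi-1} \le 1 + \int_{1}^{T} x^{\phi-1}\,dx = 1 + \frac{T^{\phi}-1}{\phi},
\]
multiplying by $1/\Gamma(1+\phi)\le 2$ to conclude.

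The only nontrivial obstacle is the uniform bound $\Gamma(1+\phi)\ge 1/2$ on $[0,1]$. In the paper this can simply be invoked as a standard fact about $\Gamma$; a self-contained alternative would be to bound it via its integral representation $\Gamma(1+\phi)=\int_0^\infty t^{\phi}e^{-t}\,dt$, or to bypass $\Gamma$ entirely through a direct product analysis of $\ln\alpha_{T-1}$ using $\ln(1+x)\ge x-x^2/2$ (yielding the constant $1/e$ instead of $1/2$, which would necessitate an adjustment of the subsequent constants).
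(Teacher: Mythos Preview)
Your proof is correct and follows essentially the same route as the paper's: both write $\alpha_t$ in closed Gamma form, control the Gamma ratios (the paper invokes Gautschi's inequality, which is exactly the log-convexity bound $\Gamma(x+\phi)\le x^{\phi}\Gamma(x)$ you use), and finish the sum with the same integral comparison $\sum_{j=1}^T j^{\phi-1}\le 1+(T^\phi-1)/\phi$. Your version is in fact a touch cleaner, since forming the ratio $\alpha_t/\alpha_{T-1}$ before bounding avoids a stray $(T+2)/(T+1)$ factor and delivers the constant $2$ directly (indeed $1/\Gamma(1+\phi)\le 1.13$).
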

\begin{proof}
    Note that we may rewrite 
    $$\alpha_t=\frac{\Gamma(T+1+1)}{\Gamma(T+1+\phi)}\frac{\Gamma(T-t+\phi)}{\Gamma(T-t+1)},$$
    where $\Gamma(\cdot)$ represents the Gamma function. 
    By Gautschi’s Inequality\footnote{We initially found that bound thanks to \citep{98348}.} \citep{gautschi_elementary_1959},
     we have that
    \begin{equation*}
    (\forall x > 0)(\forall c \in [0,1]) \quad 
    x^{1-c} \le \frac{\Gamma(x+1)}{\Gamma(x+c)} \le (x+1)^{1-c}.
    \end{equation*}
    We can use this with $c = \phi\le 1$ and $x = T+1$ or $x = T-t$ to obtain
    \begin{equation*}
    \frac{(T+1)^{1-\phi}}{(T-t+1)^{1-\phi}} \le \alpha_t \le \frac{(T+2)^{1-\phi}}{(T-t)^{1-\phi}}.
    \end{equation*}
    Now we can proceed with the inequalities we need in our main bound.
    The simplest one is a lower bound for $\alpha_{T-1}$:
    \begin{equation*}
    \alpha_{T-1} \ge \frac{(T+1)^{1-\phi}}{2^{1-\phi}}.
    \end{equation*}
    The second bound we need is an upper bound on the sum of $\alpha_t$.
    This arrives from
    \begin{eqnarray*}
    \sum_{t=0}^{T-1} \alpha_t 
     \leq 
    \sum_{t=0}^{T-1}  \frac{(T+2)^{1-\phi}}{(T-t)^{1-\phi}} 
    =
    (T+2)^{1-\phi} \sum_{s=1}^{T}  s^{\phi-1}
    \leq 
    (T+2)^{1-\phi} \left( 1 + \int_1^T s^{\phi-1} ds \phi\right),
    \end{eqnarray*}
    where the last inequality is a sum-integrand bound, see for instance \cite{garrigos_handbook_2024}.
    \begin{equation*}
    \sum_{t=0}^{T-1} \alpha_t 
    \le (T+2)^{1-\phi}\left(1+\frac{T^\phi-1}{\phi}\right). 
    \end{equation*}
    Specifically, since $2\frac{T+2}{T+1}\le 3$ and $1-\phi\le 1$, the wanted bound follows.
\end{proof}

\begin{lemma}[$T^\phi$ is not so scary]\label{L:power of stepsize is not scary appendix}
    Let $\phi=\frac{2\gamma L}{1+\gamma L}$ and $T \geq 1$. For all $K\ge 0$, if $\gamma \leq \frac{K}{\ln T}$, then 
    $T^\phi \leq e^{2LK}$.
\end{lemma}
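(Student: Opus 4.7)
The plan is to observe that $\phi = \frac{2\gamma L}{1+\gamma L}$ is at most $2\gamma L$, because the denominator satisfies $1+\gamma L \ge 1$. Since $T \ge 1$ implies $\ln T \ge 0$, this inequality for $\phi$ transfers to the exponent in $T^\phi$.

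Specifically, I would write $T^\phi = \exp(\phi \ln T)$ and apply the bound $\phi \le 2\gamma L$ to get
\[
T^\phi \le \exp(2\gamma L \ln T) = \exp\bigl(2L \cdot (\gamma \ln T)\bigr).
\]
Then I invoke the hypothesis $\gamma \le K/\ln T$, which rearranges to $\gamma \ln T \le K$ (this is where the case $\ln T = 0$, i.e.\ $T = 1$, must be handled separately, but there $T^\phi = 1 \le e^{2LK}$ trivially since $K \ge 0$). Substituting yields $T^\phi \le \exp(2LK)$, as claimed.

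There is no real obstacle here; the whole argument is a one-line monotonicity chain. The only small subtlety worth flagging in the write-up is the edge case $T = 1$, where the hypothesis $\gamma \le K/\ln T$ is vacuous (division by zero), but in that case $T^\phi = 1$ and the conclusion holds for free.
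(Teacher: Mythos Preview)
Your proof is correct and follows exactly the same one-line monotonicity argument as the paper: bound $\phi \le 2\gamma L$, write $T^\phi = \exp(\phi \ln T)$, and use $\gamma \ln T \le K$. Your explicit treatment of the edge case $T=1$ is a nice addition that the paper glosses over.
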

\begin{proof}
    From our assumptions, $\phi\le 2\gamma L \leq \tfrac{2L K}{\ln(T)}$, so $T^\phi =\exp(\phi \ln T)\le \exp(2L K)$.
\end{proof}

Finally, we prove a claim made earlier in the paper:

\begin{lemma}[Variance everywhere or nowhere]\label{L:variance everywhere or nowhere}
Let Assumptions \ref{Ass:convex smooth problem} and \ref{Ass:bounded solution variance} hold true. 
Then  
    \[
         \exists x \in \mathcal{H}, \ \Exp{\|\nabla f_i(x)\|^2} < +\infty \iff \forall x \in \mathcal{H}, \ \Exp{\|\nabla f_i(x)\|^2} \ .
    \]
\end{lemma}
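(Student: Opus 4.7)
The plan is to exploit the $L$-Lipschitz continuity of each $\nabla f_i$ provided by Assumption \ref{Ass:convex smooth problem} to transfer finiteness of the second moment from one point to any other point. The reverse implication ($\forall \Rightarrow \exists$) is trivial, so the real content is the forward direction.

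First I would fix some $y \in \mathcal H$ at which $\Exp{\|\nabla f_i(y)\|^2} < +\infty$, and pick an arbitrary $x \in \mathcal H$. Then, using the triangle inequality together with the elementary bound $\|a + b\|^2 \le 2\|a\|^2 + 2\|b\|^2$, I would write, for every index $i$,
\[
    \|\nabla f_i(x)\|^2 = \|\nabla f_i(y) + (\nabla f_i(x) - \nabla f_i(y))\|^2 \le 2\|\nabla f_i(y)\|^2 + 2\|\nabla f_i(x) - \nabla f_i(y)\|^2.
\]
Applying the $L$-Lipschitz continuity of $\nabla f_i$ to the last term (which is where Assumption \ref{Ass:convex smooth problem}.4 enters), we get the pointwise inequality
\[
    \|\nabla f_i(x)\|^2 \le 2\|\nabla f_i(y)\|^2 + 2L^2 \|x - y\|^2.
\]

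Next I would take the expectation over $i \sim \mathcal D$. Since $\|x - y\|^2$ is a deterministic constant, it can be pulled out, yielding
\[
    \Exp{\|\nabla f_i(x)\|^2} \le 2\Exp{\|\nabla f_i(y)\|^2} + 2L^2\|x - y\|^2.
\]
The right-hand side is finite by hypothesis, hence so is the left-hand side, which proves $\forall x \in \mathcal H, \ \Exp{\|\nabla f_i(x)\|^2} < +\infty$.

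There is no real obstacle here, the statement is essentially a one-line consequence of $L$-smoothness and the fact that $\|x-y\|$ does not depend on the random index $i$. One small thing worth noting for correctness is that measurability of $i \mapsto \|\nabla f_i(x)\|^2$ is needed to make sense of the expectation, but this follows from the measurability of $i \mapsto f_i(x)$ (Assumption \ref{Ass:convex smooth problem}.1) combined with the $L$-Lipschitz dependence of $\nabla f_i$ on $x$, which makes $\nabla f_i(x)$ measurable in $i$ whenever it is at any other fixed point.
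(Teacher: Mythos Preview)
Your proof is correct and starts from the same decomposition as the paper, namely
\[
    \|\nabla f_i(x)\|^2 \le 2\|\nabla f_i(y)\|^2 + 2\|\nabla f_i(x) - \nabla f_i(y)\|^2.
\]
The only difference lies in how the cross term is controlled after taking expectations. You bound it pointwise by the deterministic quantity $L^2\|x-y\|^2$ via the Lipschitz property alone, whereas the paper invokes an \emph{expected smoothness} inequality (a consequence of convexity plus smoothness, cf.\ \citep[Lemma~4.7]{garrigos_handbook_2024}) to bound $\Exp{\|\nabla f_i(x)-\nabla f_i(y)\|^2}$. Your route is in fact more elementary: it needs only Assumption~\ref{Ass:convex smooth problem}.4 (smoothness) and not convexity, and it gives an explicit constant $2L^2\|x-y\|^2$ rather than a function-value bound. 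The paper's choice is natural given that the expected-smoothness machinery is already central to the rest of the analysis, but for this particular lemma your direct Lipschitz argument is the shorter path.

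One small caveat: your closing remark on measurability is slightly circular (you deduce measurability of $i\mapsto\nabla f_i(x)$ from measurability at some other point, which is what you are trying to establish). The cleaner justification is that $\nabla f_i(x)$ is a pointwise limit of difference quotients of $i\mapsto f_i(\cdot)$, hence measurable in $i$ by Assumption~\ref{Ass:convex smooth problem}.1. This does not affect the validity of the main argument.
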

\begin{proof}
    It suffices to prove that if there is $x\in \mathcal H$ such that $\Exp{\|\nabla f_i(x)\|^2}<\infty$, then $\Exp{\|\nabla f_i(y)\|^2}<\infty$ for every $y\in \mathcal H$.
    Indeed, suppose $\Exp{\|\nabla f_i(x)\|^2}<\infty$ for some $x\in\mathcal H$, and take $y\in \mathcal H$. Since
    $$\|\nabla f_i(y)\|^2\le 2\|\nabla f_i(x)\|^2+2\|\nabla f_i(y)-\nabla f_i(x)\|^2,$$
    the result is obtained by taking expectation and using an expected smoothness inequality, see e.g. \citep[Lemma 4.7]{garrigos_handbook_2024}.
\end{proof}

\end{document}